\newtheorem*{maintheorem}{Main Theorem}
\newtheorem*{maincorollary}{Main Corollary}
\newtheorem{proposition}{Proposition}[section]
\newtheorem{lemma}{Lemma}[section]
\newtheorem{definition}{Definition}[section]
\newtheorem{remark}{Remark}[section]
\newtheorem{question}{Question}
\newtheorem{theorem}{Theorem}[section]
\newcommand{\Real}{\mathbb R}       
\newcommand{\Nat}{\mathbb N}        
\newcommand{\oo}{\infty}            
\newcommand{\acts}{\curvearrowright}
\newcommand{\Lcal}{\mathcal{L}}						
\newcommand{\mm}{\mathrm{m}}
\newcommand{\PM}[1][M]{\ensuremath{\mathscr{Pr}(#1)}}                       
\newcommand{\PTM}[2]{\ensuremath{\mathscr{Pr}_{\cramped[\scriptstyle]{#1}}(#2)}}      	
\newcommand{\one}{\mathds{1}}                   							 
\newcommand{\entPn}[1][P]{\ensuremath{H_m((P)_0^{-n+1})}}
\newcommand{\CM}[1][M]{\ensuremath{\mathcal{C}(#1)}}
\DeclarePairedDelimiterXPP\clo[1]{\mathtt{cl}}(){}{#1}				
\newcommand{\der}{\operatorname{d}\hspace{-2pt}}
\newcommand{\diam}{\operatorname{diam}}
\newcommand{\F}{\ensuremath{\mathcal{F}}}
\newcommand{\Fs}{\ensuremath{\mathcal{W}^s}}
\newcommand{\Fu}{\ensuremath{\mathcal{W}^u}}
\newcommand{\Fcs}{\ensuremath{\mathcal{W}^{cs}}}
\newcommand{\Fcu}{\ensuremath{\mathcal{W}^{cu}}}
\newcommand{\Wu}[1]{\ensuremath{W^u(#1)}}
\newcommand{\Wcs}[1]{\ensuremath{W^{cs}(#1)}}
\newcommand{\Wsloc}[1]{\ensuremath{W^{s}_{\scriptstyle \mathrm{loc}}(#1)}}
\newcommand{\Wuloc}[1]{\ensuremath{W^{u}_{\scriptstyle \mathrm{loc}}(#1)}}
\newcommand{\Ptop}{P_{\scriptstyle \mathit{top}}}			
\newcommand{\Mat}[2]{\mathtt{Mat}_{#1}(#2)}
\newcommand{\Sl}[2]{\mathtt{Sl}_{#1}(#2)}
\newcommand{\hol}[1][\F]{\mathrm{hol}^{\scriptstyle #1}}
\newcommand{\ep}{\epsilon}
\newcommand{\muf}[1][\varphi]{\mathrm{m}_{\scriptstyle #1}}
\newcommand{\nux}[1][x]{\nu^u_{#1}}
\newcommand{\nuux}{\nu^u_{\underline{x}}}
\newcommand{\Jac}{\mathrm{Jac}}
\newcommand{\Coc}[1][\mathtt{f}]{\mathscr{Coc}(#1)}
\newcommand{\Eq}{\mathscr{Eq}}
\newcommand{\h}{\mathfrak{h}}
\newcommand{\g}{\mathfrak{g}}
\DeclareFontFamily{U}{mathb}{\hyphenchar\font45}
\DeclareFontShape{U}{mathb}{m}{n}{
      <5> <6> <7> <8> <9> <10> gen * mathb
      <10.95> mathb10 <12> <14.4> <17.28> <20.74> <24.88> mathb12
      }{}
\DeclareSymbolFont{mathb}{U}{mathb}{m}{n}
\DeclareMathSymbol{\toit}{3}{mathb}{'375}
\newcommand{\subjclass}[2][2020]{%
  \let\@oldtitle\@title%
  \gdef\@title{\@oldtitle\footnotetext{#1 \emph{MSC2020 classification:} #2}}%
}
\newcommand{\keywords}[1]{%
  \let\@@oldtitle\@title%
  \gdef\@title{\@@oldtitle\footnotetext{\emph{Keywords:} #1.}}%
}
\begin{document}

	\title{Rigidity of equilibrium states and unique quasi-ergodicity for horocyclic foliations}
	\author[1]{Pablo D. Carrasco \thanks{Partially supported by FAPEMIG Universal APQ-02160-21 and CNPq-Projeto Universal  404943/2023-3}}
	\author[2]{Federico Rodriguez-Hertz \thanks{Partially supported by Katok Chair in Mathematics}}
    \affil[1]{\emph{pdcarrasco@ufmg.br}, ICEx-UFMG, Avda. Presidente Antonio Carlos 6627, Belo Horizonte-MG, BR31270-90}
	\affil[2]{\emph{hertz@math.psu.edu}, Penn State, 227 McAllister Building, University Park, State College, PA16802}

  \date{\today}
  
  \subjclass{37D35, 37D30}
  \keywords{SRB measures, Unique quasi-ergodicity, Equilibrium states}  
  \maketitle

\selectlanguage{english}
\begin{abstract}
In this paper we prove that for topologically mixing metric Anosov flows their equilibrium states corresponding to H\"older potentials satisfy a strong rigidity property: they are determined only by their disintegrations on (strong) stable or unstable leaves. 

As a consequence we deduce: the corresponding horocyclic foliations of such systems are uniquely quasi-ergodic, provided that the corresponding Jacobian is H\"older, without any restriction on the dimension of the invariant distributions. This gives another proof of a result of Babillott-Ledrappier.
\end{abstract}

\section{Introduction and Main Results}

\subsection{Equilibrium states for metric Anosov flows}

Let $\mathtt f=(f_t)_t:M\to M$ be an Anosov flow with invariant bundle decomposition $TM=E^{s}\oplus E^c\oplus E^{u}$. As it is well known, both $E^s, E^u$ are integrable to leafwise smooth foliations $\Fs, \Fu$ (which in this article we refer as the $s-, u-$ horocyclic foliations) that are contracted and expanded respectively under $f_t, t>0$. More generally, in this work we will be concerned with metric Anosov flows (also called Smale flows), which were introduced by Pollicot (inspired in  the corresponding concept for homeomorphisms due to Ruelle) and gained recent interest due to their applicability to geometrical problems. Indeed, for a word hyperbolic group $G$, Gromov constructs a compact metric space $M$ which comes together with an action $G\acts M$ by isometries, and a transitive metric Anosov flow $\mathtt f:M\to M$, in such a way that $M/x\sim f_t(x)$ is $G$-equivariantly identified with $\partial^2 G=\{(g,h)\in \partial G:g\neq h\}$. The orbits of $\mathtt f$ are a model for the (coarse) behavior of geodesics in $G$. See \cite{GromovHypGr}. More recently, in the same setting \cite{Bridgeman2015} Bridgeman et al. construct for a (projective) Anosov representation $\rho:G\to \Sl{d}{\Real}$ a metric Anosov flow (which turns out to be a H\"older reparametrization of the previous flow $\mathtt f:M\to M$), and use it among other things to obtain information about the spectrum of the representation. We remark that in these contexts the dimension of the horocyclic laminations is typically larger than one.

In what follows $\mathtt f$ denotes a transitive metric Anosov flow with invariant laminations $\Fs,\Fu$; the definition is recalled in \Cref{sub:metricAnosov}, but if the reader is not acquainted by the theory for this introduction she/he can safely assume that $\mathtt f$ is a differentiable Anosov flow. Transitive metric Anosov flows come in two flavors: either the horocyclic laminations are minimal (each one of its leaves is dense), or none of its horocyclic leaves are dense. See \cite{AnosovFlow} (Plante's arguments also apply to metric Anosov flows). The first case corresponds to the flow $\mathtt f$ being topologically mixing, while the second is the so called suspension case (in particular, there is a global transversal to the flow).

A central objective here is showing that in the first case the horocyclic laminations are much more rigid that appear at first sight, and relate this with the ergodic properties of the underlying dynamics $\mathtt f$, particularly its equilibrium states corresponding to H\"older observables (see \Cref{sub:ruelle_perron_frobenius_operator} for a review of the theory). Connecting the properties of equilibrium states of hyperbolic systems with their induced measures along horocyclic leaves is a classical and important topic in (usually smooth) ergodic theory, which typically depends on the fact that the $s-, u-$ leaves carry a natural Lebesgue class induced by some Riemannian metric on the manifold. Given an invariant probability measure $\mm$ for the flow one can detect whether or not it is an equilibrium state for the geometric potential $\varphi(x)=\int_0^1 -\log|\det D_xf_t|_{E^u}|\der t$ by checking if the disintegrations of $\mm$ along $\Fu$ are in equivalent to the corresponding Lebesgue measures on leaves \cite{BowenRuelle}. The word disintegration above is used in the sense of Rohklin theory \cite{Rokhlin}: given a Borel probability measure $\mm$ and a countably generated partition $\xi=\{A_i\}_{i\in\Lambda}$ of $M$ with respect to $\mm$ ($i\neq j \Rightarrow \mm(A_i\cap A_j)=0$), there is an essentially unique family $\{\mm^\xi_x\}_{x\in M_0}$ of probability measures on $M$ (referred as the conditionals of $\mm$ with respect to $\xi$) satisfying 
\begin{enumerate}
	\item[S-1] $\mm(M_0)=1$,
	\item[S-2] $x,y\in A_i$ implies that $\mm_x^{\xi}=\mm_y^{\xi}$ assigns measure one to $A_i$, and
	\item[S-3] for a Borel set $A \subset M, \mm(A)=\int \mm^{\xi}_x(A)\der\mm(x)$.
\end{enumerate}

There is a subtle but important point is the discussion above: the partition by $u$-leaves is not measurable in the sense of Rohklin, so one has to consider measurable partitions subordinated to $\Fu$. Since these depend in general on $\mm$, to be able to compare we need our reference measures to be defined everywhere, and not almost everywhere with respect to some fixed particular measure. When considering the geometrical potential this is not an issue, but for general equilibrium states (even for the entropy maximizing, measure which corresponds to the zero potential) the existence of these reference measures is not clear at all.

This is precisely what we elucidate here.

\begin{maintheorem}\hypertarget{main}{}
Assume $\mathtt f=(f_t)_t:M\to M$ to be a topologically mixing metric Anosov flow, and let $\varphi:M\to\Real$ be a H\"older function. Then there exists a family $\{\nu^u_x\}_{x\in M}$ satisfying:

\begin{enumerate}
	\item $\nu^u_x$ is a Radon measure on the $\Wu{x}$.

	\item If $\mm$ is any probability, invariant or not, and $\xi=\{A_i\}_{i\in\Lambda}$ is a measurable partition adapted to $\Fu$ whose conditionals are of the form
	\[
	x\in A_i\Rightarrow \mm^{\xi}_x=\frac{\nu^u_x}{\nu^u_x(A)},
    \]
   then $\mm$ is the unique equilibrium state for the system $(\mathtt f,\varphi)$.  
\end{enumerate}
\end{maintheorem}

The adapted partitions referred above are measurable partitions with some convenient additional properties: they are measurable partitions whose atoms are subsets of $\Fu$ that contain relatively open neighborhoods of each point, and are increasing under the discretized action $\{f_n\}_{n\in \Nat}$ ($f_1(A_i)$ is union of atoms of $\xi$, for every $A_i\in \xi$). These are also universal in the sense that they are partitions with respect to every possible equilibrium state. Partitions of this were introduced to Sinai for uniformly hyperbolic system (see for example \cite{SinaiMarkov}), and generalized to non-uniformly expanding foliations by Ledrappier and Strelcyn \cite{LedStrelcyn}. We remark that these are constructed for invariant measures, but if $\mm$ does not admit such partitions, then it is a priori discarded as a possible equilibrium state.

\subsection{Uniqueness of the transversal measure and rigidity}

On first glance the main motif of the \hyperlink{main}{Main Theorem} is to construct the family of measures on the unstable lamination which are can be used to detect equilibrium states. While this may be true, more careful analysis shows that it reveals a very rigid structure of $\Fu$. Indeed, the previous theorem does not assume that the measure $\mm$ is invariant under the flow, which demonstrates that all information is encoded in how this measure disintegrates along $\Fu$. This is somewhat surprising: conditions S-2, S-3 for disintegrations show that $\mm$ can be uniquely determined by its conditionals with respect to $\xi$ together with the transverse measure, that is, the induced measure on $M/\xi$. Simple minded examples in product spaces show that these can be chosen fairly independently.

The reason for this behavior is that in fact, for a given (regular) Jacobian $J$ there is only one possible quasi-invariant transverse measure for the holonomy pseudo-group of $\Fu$ associated to $J$. Let us explain this.

Given a lamination $\F$ of a metric space $M$, one can use the lamination charts to make sense of what it means for a subset $D \subset M$ to be transverse to $\F$. In the case when $M$ is a manifold, $D \subset M$ is an embedded disc and $\F$ has continuous tangent bundle, this notion coincides with transversality in the differential topology sense between $D$ and every leaf of $\F$ that it intersects. We say that a family $\{\mu_x:x\in M\}$ is a \emph{transverse measure} if it satisfies:
\begin{enumerate}
	\item each $x\in M$ is contained in some transversal $D_x$, and
	\item $\mu_x$ is a non-trivial Borel measure on $D_x$.
\end{enumerate}

Given one of such measures and $y\in\F(x)$, one can use the holonomy of the foliation to compare $\mu_x$ and $\mu_{y}$, and least for nearby $x,y$: we denote $\hol_{y,x}: E_x \subset D_x\to E_y\subset D_y$, the holonomy transport, where $E_x, E_y$ are relatively open. 

\begin{definition}
The transverse measure $\{\mu_x\}_{x}$ of $\F$ is quasi-invariant if there exist a family of positive functions $\Jac_{\mu}=\{\Jac_{y,x}: E_x \to \Real_{>0}:y\in \F(x)\}$ and positive constants $\{C(y,x): y\in\F(x)\}$ such that for $y\in \F(x)$,
\[
	\hol_{x,y}\mu_{y}=C(y,x)\Jac_{x,y}\mu_{x}.
\] 
The family $\Jac_{\nu}$ is the Jacobian of the quasi-invariant measure. If $\Jac_{\nu}\equiv 1$ then the measure is said to be an invariant transverse measure. 
\end{definition}

In the setting that we are working it is convenient to use local center-stable sets as the family of transversals to $\Fu$, and we use implicitly this choice in what follows. Next we discuss the possible Jacobians, and remark that necessarily such a family is a $(\Real_{>0},\cdot)$-valued cocycle over the holonomy pseudo-group of $\Fu$. Here is a natural way to define such a family: let
\begin{equation}\label{eq:cocicles}
 \Coc=\{h:M\to\Real_{+}: h(x)=e^{\int_0^1 \varphi(f_t x)dt}\text{ for some H\"older function }\varphi\},
 \end{equation}
 and given $h\in\Coc$, define 
\begin{equation}\label{eq:hjacobiano}
\mathscr{h}=\{H_{x_0,y_0}:x_0,y_0\in M, y_0\in\Wu{x_0}\}
\end{equation}
with
\begin{equation}
H_{x_0,y_0}(x)=\prod_{j=1}^{\oo}\frac{h(f_{-j}\circ \hol[\Fu]_{y_0,x_0} x)}{h(f_{-j}(x))},\quad x\in\Wcs{x_0}, \hol[\Fu]_{y_0,x_0} x\in\Wcs{y_0}.
\end{equation}
It is then direct to verify that $\mathscr{h}$ is a multiplicative cocycle, and in fact one can check that for every equilibrium state of $\muf$ of $\mathtt f$ corresponding to a H\"older potential, the transverse measure of $\Fu$ is quasi-invariant, with a Jacobian of the previous form. This is discussed in our article \cite{EqStatesCenter} for differentiable Anosov flows, but the proof given extends without any change to the metric case. See \cite{ContributionsErgodictheory} for further discussion.

We are able to conclude the following. 

\begin{maincorollary}\hypertarget{maincoro}{}
In the hypotheses of the \hyperlink{main}{Main Theorem}, given $h\in\Coc$ there exists $\mu^{cs}=\{\mu_x^{cs}\}$ a transverse measure for $\Fu$ such that $\mu^{cs}$ is the unique quasi-invariant measure with Jacobian given by the family $\mathscr{h}$ determined by $h$. 	
\end{maincorollary}

The proof that the \hyperlink{main}{Main Theorem} implies the \hyperlink{maincoro}{Main Corollary} is exactly the same as the one given in \cite{ContributionsErgodictheory}, and thus we will not repeat it here. 

\subsection{Comparison with other literature}

The \hyperlink{main}{Main Theorem}, which improves the classical SRB theorem \cite{BowenRuelle}, appears in our recent paper \cite{ContributionsErgodictheory} under additional hypotheses: differentiability of the Anosov flow and one-dimensionality of the unstable distribution. Questions due to professor F. Labourie motivated us to reformulate a previous version without assuming any differentiable structure; as mentioned at the beginning of the introduction, this may be relevant for geometrical applications. We remark here that the family of measures $\{\nu_x\}_{x\in M}$ are not the same as the ones constructed by Haydn \cite{localproductstructureHaydn} (in particular, if $x,y$ are in the same unstable leaf then $\nu_x$ is typically different from $\nu_y$), although they are related. Uniqueness of the Haydn family is proven in \cite{geodesicbabillotledrappier}.

The \hyperlink{maincoro}{Main Corollary} was established originally by Babillot-Ledrappier \cite{geodesicbabillotledrappier}, and a different proof was given by Schapira \cite{SCHAPIRA2004} in the context of hyperbolic manifolds (in particular, algebraic). The method of the proof of the original result consist of establishing first the theorem for the symbolic model \cite{SymbHyp} of the flow, and then assembling everything back in the manifold. The second part is delicate, and Babillot-Ledrappier refer to the work \cite{Bowen1977} for this part, while the same statement is proven for the zero potential. On the other hand, the second proof relies heavily in the geometrical information that constant negative curvature provides. Our method is different and more analytical in flavor: given a H\"older potential on $M$ we push everything to the symbolic model $\Sigma$ of the dynamics under the flow of $u-$leaves and construct a family of averaging operators $\{R_n:\CM[\Sigma]\toit\}$. For each $n\geq 0$, $R_n$ is a modified version of the transfer operator associated to the equilibrium state $\muf$ of the potential restricted to natural $\sigma$-algebra of $\Sigma$ that consists of $n$-cylinders. Finally, we prove SOT convergence of these operators to the functional defined by $\muf$, and show that this can be transferred back to the manifold. The heart of the argument is that we have never used the transverse structure (the lamination $\Fcs$), and from this we deduce our results. This type of argument is more similar to Marcus' proof of the unique ergodicity of $\Fu$ for codimension-one Anosov flows \cite{Marcus1975}, which in turn generalizes the classical theorem of Furstenberg about equi-distribution of the horocycles in surfaces of constant negative curvature \cite{Furstenberg1973}. We remark however that Marcus' proof explicitly uses that the corresponding conditional measures along $\Fu$ behave quite tamely under iterations (the Jacobian is constant), which together with one-dimensionality of the leaves makes dealing wih the corresponding averaging operators more manageable than for general potentials. For the proof of the \hyperlink{maincoro}{Main Corollary} we do not make such assumptions, and thus we provide a complete direct proof of Babillot-Ledrappier result without further assumptions.

We finish this introduction by posing a general question. It follows by the results of this article that horocyclic foliations have strong rigidity properties in terms of their admitted quasi-invariant measures. On the other hand, the existence of the renormalizing dynamics (i.e., the associated hyperbolic system) is probably not necessary for this phenomena, as the result of Veech \cite{Veech1977} shows (see also \cite{Ratner1991}). We can thus ask:

\begin{question}
Let $\Gamma$ be a co-compact lattice in a lie group $G$, and let $U$ be a one-parameter unipotent subgroup of $G$. Consider $\alpha$ a differentiable multiplicative cocycle over $U\acts \Gamma/ G$. Does there exists a unique quasi-invariant measure for the action with Jacobian given by $\alpha$?.   
\end{question}

Veech answers positively the previous question when the cocycle is trivial.


\section{Prerequisites}\label{sec:prerequisites}

\subsection{Metric Anosov flows}\label{sub:metricAnosov}

 We follow the exposition of \cite{Bridgeman2015}. Let $M$ be a metric space. A collection of embeddings $\mathcal A=\{\phi_U=(\phi_U^1,\phi^2_U):U \subset M\to N_U\times K_U\}$, where $N_U, K_U$ are topological spaces, is a lamination atlas if
 \begin{itemize}
 	\item $\mathcal U=\{U\}$ is an open covering of $M$;
 	\item whenever $U\cap V\neq \emptyset$ it holds
 	\[
 	x, y\in U\cap V \text{ implies } \phi_U^1(x)=\phi_U^1(y)\Leftrightarrow \phi_U^2(x)=\phi_U^2(y). 
 	\]
 \end{itemize}
Level sets of $\phi_U^2$ (for $\phi_U\in \mathcal{A}$) are called plaques, and a plaque chain is a finite collection $P_1, \ldots, P_k$ of plaques such that $P_i\cap P_{i+1}\neq \emptyset, \forall i=0,\ldots,k-1$. Given such an atlas, it defines an equivalence relation $\F$ on $M$, where $x,y$ are equivalent if and only if they can be joined by a plaque chain. The partition induced by $\F$ (also denoted by $\F$) is referred as the lamination, whereas its atoms are called leaves. The collection of plaques induced by a given lamination atlas is called a plaquation. It is useful to consider as equivalent lamination atlases defining the same structure on $M$: the phrase ``lamination atlas of $\F$'' is used in this sense. Note that if $U \subset M$ is open then $\F$ defines naturally a lamination in $U$, which is denoted by $\F|U$.

Suppose now that $\F$ is a lamination and $\mathtt f=(f_t)_{t\in \Real}$ is a flow on $M$. The lamination $\F$ is invariant under $\mathtt f$ if each $f_t$ permutes the leaves of $\F$. An invariant lamination $\F$ is transverse to $\mathtt f$ if there exists an atlas $\mathcal B=\{\psi_U: N_U\times (K_U\times (-\epsilon_U,\epsilon_U))\}$ of $\F$ so that
\[
	x\in U, \psi(x)=(a,b,t_0)\Rightarrow \phi_t(x)=\psi_U^{-1}(a,b,t_0+t)\quad \forall |t+t_0|<\epsilon_U.
\]
In this case note that the same atlas $\mathcal B$ serves as a laminated atlas for the partition induced by the orbits of $\mathtt f$. Due to invariance, we can use this structure to extend the lamination $\F$ by saturating their leaves under $\mathtt f$ and obtain a new lamination $\F^c=\{f_{(-\oo,\oo)}(L):L\in \F\}$. By definition, each leaf of $\F^c$ is saturated by orbits of $\mathtt f$: if it is also saturated by leaves of $\F$ we say that $\F$ is complete under $\mathtt f$. 

Given $\F_1,\F_2$ laminations of $M$, we say that they are transvere if there exists a lamination atlas $\mathcal A=\{\phi_U=(\phi_U^1,\phi^2_U:U \subset M\to N_U\times K_U\}$ for $\F_1, \F_2$ such that each $\phi_U$ sends $\F_1|U$ to the horizontal lamination $\{N_U\times\{b\}:b\in K_U\}$, and $\F_2|$ to the vertical lamination $\{\{a\}\times K_U:a\in N_U\}$.

\begin{definition}
Let $(M,d)$ be a compact metric space and $\mathtt f=(f_t)_{t\in \Real}$ a continuous flow on it. We say that $\mathtt f$ is a metric Anosov flow if there exist laminations $\Fu,\Fs$ with some fixed associated plaquations $\mathcal P^u=\{P^u\}, \mathcal P^s=\{P^s\}$, and constants $C>0, 0<\lambda<1$ such that
\begin{enumerate}
	\item $\Fu,\Fs$ are invariant under $\mathtt f$ and complete.
	\item $\Fcs,\Fu$ and $\Fs, \Fcu$ are transverse.
	\item It holds for all $t\geq 0$,
	\begin{align*}
	x,y\in P^u\Rightarrow d(f_{-t}(x),f_{-t}(y))\leq C\lambda^{t}\\
	x,y\in P^s\Rightarrow d(f_{t}(x),f_{t}(y))\leq C\lambda^{t}.
	\end{align*}
\end{enumerate}
\end{definition}

For the rest of the article $\mathtt f=(f_t)_{t\in \Real}$ denotes a metric Anosov flow.

\subsection{Symbolic models for hyperbolic systems}

Given a matrix $A\in\Mat{d}{\{0,1\}}$ one considers the two-sided subshift of finite type that it determines,
\[
	\Sigma_A=\{\underline{x}=(x_n)_n\in\{0,1\}^{\mathbb Z}:A_{x_n x_{n+1}}=1,\forall n\in \mathbb Z\}.
\]
For $k, l\in\Nat$ and a word $a_{-k}\cdot a_{-k+1}\cdots a_{l}\in \{0,1\}^{l+k+1}$ with $A_{a_i,a_{i+1}}=1$ for all $i$, we denote
\[
	C(a_{-k}\cdots a_l)_{-k}=\{\underline{x}\in\Sigma_A:x_i=a_i, \forall -k\leq i\leq l\}.
\]
Note that 
\[
	C(a_{-k}\cdots a_l)_{-k}=\bigcap_{i=-k}^l\sigma^{-i}(C(a_i)_0).
\]
\begin{definition}
Sets of the previous form are called rectangles. If $k=l$ we say that the rectangle is symmetric.
\end{definition}

The topology in $\Sigma_A$ is metrizable, where a compatible metric is given as 
\[
	d(\underline x, \underline y)=\frac{1}{2^{N+1}};
\]
above $N$ is the size of the largest symmetric rectangle that contains $\underline{x}, \underline y$.

\smallskip 

Similar considerations can be applied to the one-sided shift spaces
\begin{align*}
\Sigma_A^{-}=\{\underline{x}=(x_n)_{n\leq 0}\in\{0,1\}^{-\Nat}:A_{x_n x_{n+1}}=1,\forall n<0\}\\
\Sigma_A^{+}=\{\underline{x}=(x_n)_{n>0}\in\{0,1\}^{\Nat^{\ast}}:A_{x_n x_{n+1}}=1,\forall n>0\}
\end{align*}
Note that any $\underline{x}\in\Sigma_A$ can be written uniquely in the form
\[
	\underline{x}=\underline{x}^-\cdot \underline{x}^+ \quad \underline{x}^-\in \Sigma_A^{-}, \underline{x}^+\in \Sigma_A^{+}
\]
where the $\cdot$ denotes concatenation. In both $\Sigma_A, \Sigma_A^{+}$ there is a continuous homeomorphism, respectively a $d$-to-$1$ endomorphism (called simply the shift) given as
\[
	\sigma(\underline x)=(x_{n+1})_n, 
\]
whereas in $\Sigma_A^{-}$ we use the inverse $\sigma^{-1}$.
\begin{definition}
$A$ is mixing\footnote{Equivalently, it is irreducible and aperiodic.} if there exists $M\in\Nat$ such that every entry of $A^M$ is positive.
\end{definition}
It is easy to verify that $A$ is mixing if and only if $\sigma:\Sigma_A\to\Sigma_A$ is topologically mixing.

\smallskip 

For $\underline x\in \Sigma_A$ we denote its local stable/unstable sets by 
\begin{align*}
\Wsloc{\underline x}=\{\underline{y}\in\Sigma_A: x_n=y_n, \forall n\geq 0\}\\
\Wuloc{\underline x}=\{\underline{y}\in\Sigma_A: x_n=y_n, \forall n\leq 0\}.
\end{align*}
A central remark in the theory is that one can consider $\Sigma^{+}_A$ as the space of local unstable sets of $\Sigma_A$: for $\underline{y}\in \Wuloc{\underline x}$, $\underline{y}=\underline{x}^-\cdot \underline{y}^+$.

\smallskip 
 
Given $\phi:\Sigma_A\to\Real$ and $n\geq 0$ let
\[
	\mathrm{var}_n(\phi)=\sup\{|\phi(\underline{x})-\phi({\underline{y}})|:x_k=y_k,\forall |k|\leq n\},
\]
and denote
\[
	\F_A=\{\phi: \mathrm{var}_n(\phi)\leq C\theta^n, \text{ for some }C, \theta>0\}.
\]
Similarly we can define the function spacees $\F_A^{\ast}, \ast\in\{-, +\}$. We recall the following classical lemma (see for example lemma $1.6$ in \cite{EquSta}).

\begin{lemma}\label{lem:Sinai}
If $\phi\in\F_A$ then there exists functions $\phi^{\ast},\gamma^{\ast}:\Sigma_A\to\Real,  \ast\in\{-, +\}$ such that
$\phi^{\ast}-\phi=\gamma^{\ast}-\gamma^{\ast}\circ \sigma$, and 
\begin{itemize}
	\item $\phi^{+}$ depends only on the coordinates $>0$, 
	\item $\phi^{-}$ depends only on the coordinates $\leq 0$.
	\item $\phi^{\ast}\in\F_A^{\ast}$.
\end{itemize}
\end{lemma}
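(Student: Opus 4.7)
The plan is to carry out the classical Bowen--Sinai coboundary trick: produce an admissible retraction onto sequences determined by one side of the alphabet, and then correct $\phi$ by an absolutely convergent telescoping cocycle. I describe the $\ast = +$ case in detail; the $\ast = -$ case will be handled by the symmetric construction obtained via time reversal.

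For $\ast = +$, I choose for each symbol $b$ an admissible reference sequence $\pi(b) \in \Sigma_A$ with $\pi(b)_1 = b$, and define the retraction $r : \Sigma_A \to \Sigma_A$ by $r(\underline{x})_k = x_k$ for $k \geq 1$ and $r(\underline{x})_k = \pi(x_1)_k$ for $k \leq 0$. Then $r(\underline{x})$ is admissible, agrees with $\underline{x}$ at every positive coordinate, and depends on $\underline{x}$ only through those coordinates. I set
\begin{equation*}
\gamma^+(\underline{x}) := \sum_{n=0}^{\infty}\bigl[\phi(\sigma^n r(\underline{x})) - \phi(\sigma^n \underline{x})\bigr].
\end{equation*}
The points $\sigma^n \underline{x}$ and $\sigma^n r(\underline{x})$ coincide at every coordinate $k$ with $n+k \geq 1$, hence in particular on the symmetric window $|k| \leq n-1$; the hypothesis $\phi \in \mathcal{F}_A$ then bounds the $n$-th summand by $C\theta^{n-1}$, so the series converges absolutely and uniformly.

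Next I set $\phi^+ := \phi + \gamma^+ - \gamma^+ \circ \sigma$. A careful pairing of summands (the naive separation of $\gamma^+$ and $\gamma^+ \circ \sigma$ produces two individually divergent telescoping pieces) yields the closed form
\begin{equation*}
\phi^+(\underline{x}) = \phi(r(\underline{x})) + \sum_{n=1}^{\infty}\bigl[\phi(\sigma^n r(\underline{x})) - \phi(\sigma^{n-1} r(\sigma \underline{x}))\bigr].
\end{equation*}
Every appearance of $\underline{x}$ on the right is through $r(\underline{x})$ or $r(\sigma \underline{x})$, both of which depend only on positive coordinates; the analogous variation estimate shows that each summand is $O(\theta^{n-2})$, so the series converges absolutely and gives $\phi^+ \in \mathcal{F}_A^+$. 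For the case $\ast = -$ I run the mirror construction: pick canonical futures $\tilde{\pi}(a)$ with $\tilde{\pi}(a)_0 = a$, retract by $\tilde{r}(\underline{x})_k = x_k$ ($k \leq 0$) and $\tilde{r}(\underline{x})_k = \tilde{\pi}(x_0)_k$ ($k \geq 1$), and sum the correction over the backward orbit of $\sigma$; the algebraic manipulations are the time-reversed analogues of the $+$ case and yield $\gamma^-, \phi^- \in \mathcal{F}_A^-$ with the stated cohomology.

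The principal obstacle is the passage to the closed-form expression for $\phi^+$ above: since $\gamma^+ - \gamma^+ \circ \sigma$ does not split additively into two convergent series, one must identify the surviving terms by pairing indices in the order that preserves absolute convergence and then verify that every remaining piece is captured by the retraction. Once this bookkeeping is carried out, both the dependence of $\phi^+$ on only the positive coordinates and the exponential bound $\mathrm{var}_N(\phi^+) \leq C'\theta^N$ follow by the same geometric estimates that controlled $\gamma^+$.
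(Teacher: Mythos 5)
Your proof is correct and is precisely the classical Bowen--Sinai coboundary construction that the paper cites (Lemma~1.6 in Bowen's \emph{Equilibrium States}): retract to a canonical past, sum the resulting differences along the forward orbit, and read off the closed form to see one-sided dependence and exponential variation. The only adaptation you made--and made correctly--is to retract onto coordinates $\geq 1$ rather than $\geq 0$, matching the paper's convention that $\phi^+$ lives on coordinates $>0$.
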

It follows that we can identify $\phi^{\ast}$ as an element in $\F_A^{\ast}$. 

\subsection{Suspension} 
\label{sub:suspension}


If $R:\Sigma_A\to \Real_{>0}$ is continuous, we consider the space 
\[
	\{(\underline{x},t)\in\Sigma_A\times\Real: 0\leq t\leq R(\underline x)\}
\]
and identify 
\[
	(\underline x, R(\underline x))\sim (\sigma \underline x, 0) 
\]
to obtain a compact bundle over the circle, $S(A,R)=\Sigma_A\to \Real_{>0}/\sim$, together with the natural flow
\[
	s_t([\underline x, u])=[\underline x, u+t].
\]
\begin{definition}
The space $S(A,R)$ is the suspension of the shift map $\sigma:\Sigma_A\to \Sigma_A$ under the roof function $R$. The flow $s_t$ is the suspension flow.
\end{definition}
\begin{remark}
The sets 
\[
	D(i)=\{[\underline{x},0]:x_0=i\},\quad 1\leq i\leq d
\]
are pairwise disjoint, and $T=\bigcup_{i=1}^d D(i)$ are global transversal to the flow $s_t$, in the sense that every orbit of $s_t$ intersects $T$. 
\end{remark}

Write, using \Cref{lem:Sinai},
\[
	R^{\ast}-R=v^{\ast}-v^{\ast}\circ \sigma\quad \ast\in\{+,-\}
\]
and construct the spaces 
\begin{align*}
S^{\ast}(A,R)=\{(\underline{x},t)\in\Sigma_A\times\Real: 0\leq t\leq R^{\ast}(\underline x)\}/\sim
\end{align*}
by identifying $(\sigma^{-1}\underline{x},R^{-}(\underline x))\sim (\underline{x},0)$ in $S^{-}(A,R)$, and $(\sigma(\underline x), 0)\sim (\underline{x},R^{+}(\underline{x}))$ in $S^{+}(A,R)$. Similarly as in the case of $S(A,R)$ one obtains semi-flows in $S^{-}(A,R), S^{+}(A,R)$.

\smallskip

We can characterize easily the local unstable manifolds of a point in the suspension, when $R\in \F_A$. Define the map $\Psi :\Delta\subset S^-(A,R)\times \Sigma_A^{+}\to S(A,R)$ by
\[
	\Psi([\underline{x},t],\underline{y}) =[\underline{z},t+v^-(\underline{z})],
\]
where $\underline{z}=\underline{x}\cdot\underline{y}\in \Sigma_A$ (assuming that $A_{x_0,y_1}=1$). Then $\Psi$ is an homeomorphism and sends $\Sigma_A^{+}$ to the local unstable foliation in $S(A,R)$.

\smallskip

We finish this part recalling the following important theorem.

\begin{theorem}\label{thm:symbolic}
Given a metric Anosov flow $\mathtt f=(f_t)_t:M\to M$ and $\ep>0$, there exists $A\in\Mat{d}{\{0,1\}}$, $R\in\F_A$ strictly positive and a H\"older continuous function $\pi:S(A,R)\to M$ such that
\begin{itemize}
	\item $\pi$ is surjective, uniformly bounded to one, and $1-1$ on a residual set.
	\item $\pi\circ s_t=f_t\circ \pi, \forall t$.
	\item $\diam \pi(D(i))<\ep, \forall 1\leq i\leq d$.
\end{itemize}
If $\mathtt f$ is topologically mixing then $A$ is mixing.
\end{theorem}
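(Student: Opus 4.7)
The plan is to follow the classical Bowen--Ratner construction of Markov sections for hyperbolic flows, then pass to the associated suspension SFT. First I would choose a finite family of points $p_1,\dots,p_d\in M$ whose forward orbits under $\mathtt{f}$ are $\ep/2$-dense in $M$ (possible by topological transitivity), and around each $p_i$ erect a small embedded disc $D_i$ transverse to the flow, locally modelled on the product of a strong-stable plaque and a strong-unstable plaque through $p_i$. After shrinking and ``squaring off'' these discs along local stable/unstable plaques of the first-return map, each $D_i$ inherits a local product structure, and $T=\bigsqcup D_i$ becomes a global Poincar\'e section carrying a first-return map $P:T\to T$ and a H\"older return time $\tau:T\to\Real_{>0}$.

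The crucial step is to refine the $D_i$ into a genuine Markov family $R_1,\dots,R_d$: one iteratively trims each $D_i$ along stable/unstable boundaries using pre- and forward-images under $P$ so that whenever $P(R_i)\cap R_j\neq\emptyset$ the image of any unstable plaque of $R_i$ crosses $R_j$ completely, and symmetrically for stable plaques. This is the technical heart of the proof and uses uniform hyperbolicity together with transversality of the stable and unstable foliations. Once the Markov property holds, set $A_{ij}=1$ iff $P(\mathrm{int}\,R_i)\cap \mathrm{int}\,R_j\neq\emptyset$, and let $R:\Sigma_A\to\Real_{>0}$ record the return time along the symbolic trajectory; H\"older regularity of $R$, hence $R\in\F_A$, follows from H\"older regularity of $\tau$ together with exponential contraction of cylinders under $P^{\pm 1}$.

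The coding map is then built in the standard way: to $\underline{x}\in\Sigma_A$ one associates the unique point $\pi_0(\underline{x})=\bigcap_{n\in\mathbb{Z}}P^{-n}(R_{x_n})\in T$ (well-defined and H\"older continuous because the intersections contract exponentially), and one extends to the suspension by $\pi([\underline{x},t])=f_t(\pi_0(\underline{x}))$. The identity $\pi\circ s_t=f_t\circ \pi$ then holds by construction, and $\diam\pi(D(i))<\ep$ is inherited from $\diam D_i<\ep$. Surjectivity is a shadowing/specification argument, while the uniformly bounded multiplicity comes from the observation that ambiguity in the coding arises only for points whose orbit meets the stable/unstable boundary $\partial^s R_i\cup \partial^u R_i$ of some rectangle, and at each such point the number of distinct symbolic continuations is bounded by the number of rectangles sharing that boundary piece, uniformly in the orbit. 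The residual set on which $\pi$ is injective is then the complement of the $\mathtt{f}$-orbit of the (finitely many) boundaries, a countable union of nowhere-dense closed sets.

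Finally, mixing is transferred as follows: since $\pi$ is a finite-to-one factor map from $s_t$ to $\mathtt{f}$, topological mixing of $\mathtt{f}$ forces topological mixing of $s_t$; this in turn forces topological mixing of $\sigma$ on $\Sigma_A$ (otherwise the suspension would split into a finite cyclic family of transitive components), which is equivalent to mixing of $A$. In my view the hardest step is the Markov refinement: one must perform the trimming without destroying the local product structure of the rectangles while keeping the final $R_i$ of controlled diameter (governed by the starting parameter $\ep$), and verifying termination of the iterative procedure is the genuinely delicate geometric content of the original Bowen/Ratner construction.
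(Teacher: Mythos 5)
The theorem as stated in the paper is quoted directly from Bowen~\cite{SymbHyp} and Ratner~\cite{RatnerMarkov}; the paper supplies no proof of its own, so there is nothing in-text to compare against. Your sketch follows the standard Bowen--Ratner programme, and the steps up to and including the construction of the coding map $\pi$, the bounded multiplicity, and injectivity off the orbit of the section boundaries are a faithful outline of that argument.

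However, your treatment of the final assertion (if $\mathtt{f}$ is topologically mixing then $A$ is mixing) has a genuine gap. You argue that topological mixing of the suspension flow $s_t$ forces topological mixing of $\sigma$ on $\Sigma_A$, ``otherwise the suspension would split into a finite cyclic family of transitive components.'' That implication is false. If $A$ is irreducible with period $p>1$, the base $\Sigma_A$ decomposes into $p$ closed sets cyclically permuted by $\sigma$, but the suspension does \emph{not} split: $s_t$ is flow-conjugate to the suspension of $\sigma^p$ restricted to one cyclic class under the roof $R+R\circ\sigma+\cdots+R\circ\sigma^{p-1}$, and such a suspension can perfectly well be topologically mixing even though the base shift is not --- the discrete $p$-cycle is dissolved by the continuous time parameter. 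So mixing of the suspension flow does not imply aperiodicity of the transition matrix, and by the same token mixing of $\mathtt{f}$ does not, on its own, yield aperiodicity of $A$ for an arbitrary Markov section. Aperiodicity of $A$ is an additional property that must be \emph{arranged} in the construction of the section (this is one of the points Bowen handles explicitly), not a soft consequence of the finite-to-one factor structure.
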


The differentiable case is due to Ratner and Bowen \cite{SymbHyp,RatnerMarkov}, while the generalization to metric Anosov flows is due to Pollicott \cite{Pollicott1987}.

For a metric Anosov flow $\mathtt f$ with symbolic model $\pi:S(A,R)\to M$ as in the previous theorem, the $i$-th rectangle is 
\[
	\mathtt R_i=\pi(D(i)),
\]
and it follows that $\tilde T=\bigcup_{i=1}^d \mathtt R_i$ is a global transversal to $\mathtt f$; in the differentiable case one can even make the construction so that the relative interior of each $R_i$ is smooth.

\subsection{Ruelle-Perron-Frobenius operator and equilibrium states} 
\label{sub:ruelle_perron_frobenius_operator}

For a continuous map of a compact metric space $f:M\to M$ and a continuous function $\varphi:M\to \Real$ we denote
\begin{enumerate}
	\item $\PTM{f}{M}$ the set of $f$-invariant probability measures,
	\item $\Ptop(\varphi)=\sup_{\nu\in \PTM{f}{M}}\{h_{\nu}(f)+\int \varphi d\nu\}$ the topological pressure of the system $(f,\varphi)$,
	\item $\Eq(f,\varphi)=\{\mu\in\PTM{f}{M}:\Ptop(\varphi)=h_{\mu}(f)+\int \varphi d\mu\}$, the set of equilibrium states of $(f,\varphi)$.
\end{enumerate}
Correspondingly, for a flow $\mathtt f=(f_t)_t$ on $M$ we write
\begin{enumerate}
	\item[4.] $\PTM{\mathtt f}{M}=\bigcap_{t}\PTM{f_t}{M}$, the set of flow invariant measures,
	\item[5.] $\Ptop(\mathtt f, \varphi)= \sup_{\nu\in \PTM{\mathtt f}{M}}\{h_{\nu}(f_1)+\int \varphi d\nu\}$, the topological pressure of the system $(\mathtt f, \varphi)$,
	\item[6.] $\Eq(\mathtt f,\varphi)$ the set of equilibrium states of the system $(\mathtt f,\varphi)$.
\end{enumerate}

\begin{theorem}[Proposition $6.2$ in \cite{EqStatesCenter}]
If $\mathtt f=(f_t)_t$ is a topologically mixing Anosov flow and $\varphi$ is H\"older, then $\Eq(\mathtt f,\varphi)=\Eq(f_1,\varphi^1)=\{\muf\}$, where
\[
	\varphi^1(x)=\int_0^1 \varphi(f_t x) \der t.
\]
\end{theorem}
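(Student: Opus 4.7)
The plan is to transfer the variational problem to the symbolic model $\pi : S(A,R) \to M$ supplied by the Bowen-Ratner coding, and then invoke classical thermodynamic formalism on mixing subshifts of finite type. Since $\pi$ is a H\"older semi-conjugacy, uniformly bounded-to-one, and injective on a residual set, its pushforward induces an entropy- and integral-preserving bijection between $(s_t)_t$-invariant and $\mathtt f$-invariant probabilities. It thus suffices to prove the analogous statement for the suspension flow with potential $\tilde\varphi = \varphi \circ \pi$ on $S(A,R)$.

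First I would treat the suspension. By the Abramov correspondence, flow-invariant probabilities $\bar\mu$ on $S(A,R)$ correspond to $\sigma$-invariant probabilities $\mu$ on $\Sigma_A$ via $\bar\mu = (\mu \times \mathrm{Leb})/\mu(R)$, and one has
\[
h_{\bar\mu}(s_1) = \frac{h_\mu(\sigma)}{\mu(R)}, \qquad \int \tilde\varphi\, d\bar\mu = \frac{\mu(\phi)}{\mu(R)},
\]
where $\phi(\underline{x}) = \int_0^{R(\underline{x})} \tilde\varphi([\underline{x},t])\, dt$ lies in $\F_A$ since both $R$ and $\tilde\varphi$ are H\"older. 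A direct computation shows that $\bar\mu$ is a flow equilibrium state iff $\mu$ is an equilibrium state for the induced potential $\phi - P R$ on $\Sigma_A$, where $P = \Ptop(\mathtt f, \varphi)$. Since $\phi - PR \in \F_A$ and $A$ is mixing, the classical Ruelle-Perron-Frobenius theorem yields a unique equilibrium state on the base, and hence a unique flow-invariant equilibrium state $\muf \in \Eq(\mathtt f, \varphi)$.

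Next I would show $\Eq(\mathtt f, \varphi) = \Eq(f_1, \varphi^1)$. The inclusion $\subset$ is routine: for flow-invariant $\nu$, $h_\nu(f_1) = h_\nu(\mathtt f)$, and $\int \varphi^1 d\nu = \int \varphi\, d\nu$ by Fubini and flow invariance. For the converse, given $\mu \in \PTM{f_1}{M}$, form the flow-averaged $\bar\mu = \int_0^1 (f_t)_*\mu\, dt \in \PTM{\mathtt f}{M}$: affinity of entropy together with $(f_t)_*\mu$ having the same $f_1$-entropy as $\mu$ (since $f_t$ commutes with $f_1$) gives $h_{\bar\mu}(f_1) \geq h_\mu(f_1)$, while flow invariance yields $\int \varphi\, d\bar\mu = \int \varphi^1 d\mu$. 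This proves $\Ptop(\mathtt f, \varphi) = \Ptop(f_1, \varphi^1)$, and whenever $\mu \in \Eq(f_1, \varphi^1)$ we get $\bar\mu \in \Eq(\mathtt f, \varphi) = \{\muf\}$.

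Finally, to upgrade $\bar\mu = \muf$ to $\mu = \muf$, I would use that $\muf$ is $f_1$-ergodic. This comes from the same symbolic machinery: topological mixing of $\mathtt f$ is equivalent to $R$ not being cohomologous in $\F_A$ to a function valued in a closed proper subgroup of $\Real$, which forces mixing of $s_t$ at time $1$ and, by transfer through $\pi$, of $f_1$ with respect to $\muf$. Then $\bar\mu = \muf$ being $f_1$-ergodic forces, by uniqueness of the ergodic decomposition applied to the convex integral $\bar\mu = \int_0^1 (f_t)_*\mu\, dt$, the identity $(f_t)_*\mu = \muf$ for Lebesgue-a.e.\@ $t \in [0,1]$; picking such a $t$ and invoking flow-invariance of $\muf$ yields $\mu = \muf$. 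The main technical obstacle I foresee is the careful bookkeeping around the symbolic coding $\pi$: proving that its bounded-to-one character combined with injectivity on a residual set really transports equilibrium states in both directions, so that the variational problems on $M$ and on $S(A,R)$ are genuinely equivalent. Once this is handled, the remaining steps (Abramov's formula, RPF on mixing SFTs, and the non-lattice criterion for suspension mixing) are classical.
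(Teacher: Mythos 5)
The paper does not prove this statement: it is quoted directly as Proposition~6.2 of the cited reference \cite{EqStatesCenter}, so there is no internal proof here to compare against. Judged on its own, your sketch follows the standard and correct Bowen--Ruelle route: transfer through the Markov coding $\pi$, Abramov's formula plus the reduction to the induced potential $\phi - PR$ on $\Sigma_A$, the Ruelle--Perron--Frobenius theorem for uniqueness on a mixing subshift, flow-averaging to relate $\Eq(f_1,\varphi^1)$ with $\Eq(\mathtt f,\varphi)$, and finally extremality of the $f_1$-ergodic measure $\muf$ to force $(f_t)_*\mu = \muf$ for a.e.\ $t$. All the cited ingredients are genuine theorems and the logical flow is sound.

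Two places are overstated as written and should be repaired rather than asserted. First, $\pi_*$ is not a bijection on invariant probabilities; it is surjective and entropy-preserving (uniformly bounded-to-one factor maps preserve entropy), but injective only on measures giving zero mass to the locus where $\pi$ fails to be one-to-one. The needed fact, which is the heart of Bowen's original argument, is that the equilibrium Gibbs measure on $S(A,R)$ does give zero mass to that locus; without this one cannot conclude that a unique equilibrium state upstairs yields a unique one downstairs. Second, in the barycenter step you need the entropy map to be affine over the continuous average $\int_0^1 (f_t)_*\mu\,\der t$, not merely over finite convex combinations; this is true here because $f_1$ is expansive (so $\nu\mapsto h_\nu(f_1)$ is Borel, affine, and the integral representation of entropy applies), but it is a nontrivial property of the entropy map and must be invoked explicitly. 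With those two points filled in, your argument is a complete proof of the quoted proposition.
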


From now on $\mathtt f=(f_t)_t$ denotes a metric Anosov flow in the hypotheses of the \hyperlink{main}{Main Theorem} with symbolic model $\pi: S(A,R)\to M$, and $\varphi:M\to \Real$ is a fixed H\"older function. It is not loss of generality (by subtracting the pressure to $\varphi$) to assume
\[
	\Ptop(\mathtt f, \varphi)=0.
\] 

\smallskip 

We recall one possible construction of the equilibrium state associated to the system $(\mathtt f,\varphi)$. The map $\phi=\varphi\circ \pi$ is H\"older continuous, and one is led to consider equilibrium states for suspended flows. Due to the structure of these, it is enough to consider equilibrium states for $\sigma:\Sigma_A\to \Sigma_A$, replacing the potential $\phi$ by its integrated version
\[
	\tilde{\phi}(\underline{x})=\int_0^{R(\underline x)} \phi([\underline x,u])du.
\]
Using the (local) product structure of $S(A,R)$, one defines also $\psi:\Delta'\subset \Sigma^{-}_A\times S^{+}(A,R)\to\Real$ with
\[
	\psi(\underline x, [\underline y,t])=\phi(\underline x\cdot \underline{y}, t-v^{+}(\underline x\cdot \underline{y}))
\]
together with its integrated version
\[
	\tilde{\psi}(\underline{x}\cdot \underline{y})=\int_0^{R^{+}(\underline y)} \psi(\underline{x},[\underline{y},u])du.
\]
It turns out that  $\tilde{\psi}-\tilde{\phi}=k-k\circ \sigma$, with
\[
	k(\underline{x}\cdot\underline y)=\int_{-v^{+}(\underline{x}\underline y)}^{0}\phi([\underline{x}\cdot\underline y, u])du.
\]
If follows then that $\Eq(\sigma,\tilde{\phi})=\Eq(\sigma,\tilde{\psi})$.

We can now apply the technology of transfer operators: write $\tilde{\psi}^{+}=\tilde{\psi}+w-w\circ \sigma$, and for continuous valued functions $h\in \CM[\Sigma_A^{+}]$ let
\[
	\Lcal h(\underline x)=\sum_{\sigma \underline y=\underline x} e^{\tilde{\psi}^{+}(\underline y)}h(\underline{y}).
\]
Then $\Lcal \one=\one$, and the equilibrium state $\mu$ for the system $(\Sigma_A^{+},\tilde{\psi}^{+})$ is the unique eigen-measure of the adjoint of $\Lcal$. The corresponding equilibrium state for the system $(\sigma, \tilde{\phi})$ is then the (unique) shift invariant extension of $\mu$ to $\Sigma_A$, which we also denote as $\mu$.

The approach of N. Haydn (see \cite{localproductstructureHaydn}) is then defining the measure $\mu^u_{\Psi([\underline x,0],\underline{y})}$ on $\Wuloc{\Psi([\underline x,0],\underline{y})}$ as
\[
	\Psi_{\ast} e^{-(w+v)}\mu 
\]
and for a general point $\Psi([\underline x, t],\underline{y})$ with $0\leq t\leq R^{+}(\underline x)$ we define $\mu^u_{\Psi([\underline x, t],\underline y)}$, so that
\[
	(s_{-t})_{\ast}\mu^u_{\Psi([\underline x, t],\underline y)}=e^{-\int_{0}^{t}\phi([\cdot, u])\der u}\mu^u_{\Psi([\underline x, 0],\underline y)}
\]
\begin{remark}
The function $w+v$ is the transfer function between the cohomologous potentials $\tilde{\phi}, \tilde{\psi}^{+}$.
\end{remark}

\begin{theorem}
The family of measures $\mu^u=\{\mu^u_p:p\in S(A,R)\}$ satisfies:
\begin{enumerate}
 	\item each $\mu_{p}^u$ is non-atomic with full support in $\Wuloc{p}$.
 	\item It holds 
	\[
	(s_{-t})_{\ast}\mu^u_{s_t(p)}=e^{-\int_0^t \phi([\cdot,u])du}\mu^u_{p}.
	\]
    \item The family $\mu^u$ depends continuously on $p$
 \end{enumerate} 
\end{theorem}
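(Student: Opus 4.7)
The plan is to trace each of the three properties back to corresponding facts about the Gibbs state $\mu$ on $\Sigma_A^{+}$ for the Hölder potential $\tilde{\psi}^{+}$, since by construction $\mu^u$ is a pushforward-plus-reweighting of $\mu$ via the local product map $\Psi$ and the suspension flow. The Ruelle-Perron-Frobenius theory gives $\mu$ full support, non-atomicity, and continuous dependence on the data; all three items will follow once the construction is verified to be well-posed across the identifications in $S(A,R)$.

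For item (1), non-atomicity and full support of $\mu$ on $\Sigma_A^{+}$ are the standard Gibbs estimates $\mu(C(a_0\cdots a_n)_0)\asymp e^{S_n\tilde{\psi}^{+}(\underline{z})}$, which are strictly positive and tend to zero along any nested cylinder. Since $\Psi$ is a homeomorphism from $\Sigma_A^{+}$ onto the local unstable set of a base point $[\underline{x},0]$, and the weight $e^{-(w+v)}$ is strictly positive and continuous, both properties pass to $\mu^u_{[\underline{x},0]}=\Psi_{\ast}e^{-(w+v)}\mu$. For a general $p=s_t(q)$ in the interior of a fiber, the flow relation in item (2) exhibits $\mu^u_p$ as a positive continuous reweighting of $(s_t)_{\ast}\mu^u_q$, and both properties persist.

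For item (2), the identity is built into the definition whenever $0\leq t\leq R^{+}(\underline{x})$ for $q$ at the base of a fiber. The delicate step is the consistency check at the identifications $(\underline{x},R(\underline{x}))\sim(\sigma\underline{x},0)$: one must show that the measure obtained at the base of the $\sigma\underline{x}$-fiber via $\Psi_{\ast}e^{-(w+v)}\mu$ agrees with the one transported from the top of the $\underline{x}$-fiber by the prescribed rule. This is exactly where the cohomological identities $\tilde{\psi}-\tilde{\phi}=k-k\circ\sigma$ and $\tilde{\psi}^{+}=\tilde{\psi}+w-w\circ\sigma$ enter, combined with the shift-invariance $\sigma_{\ast}\mu=\mu$: after reorganizing the exponentials and pushing forward by $\sigma$ the two definitions agree up to precisely the factor $e^{-\tilde{\phi}(\underline{x})}=e^{-\int_{0}^{R(\underline{x})}\phi([\cdot,u])du}$ dictated by (2). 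Item (3) is then immediate, since $\mu$ is fixed, $\Psi$ and $e^{-(w+v)}$ are continuous, and the flow reweighting in (2) is jointly continuous, so $p\mapsto \mu^u_p$ is weakly continuous.

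The main obstacle is precisely the consistency check for (2), i.e.\ the well-posedness of Haydn's prescription across the fiber gluings; the Hölder regularity of $\varphi$, the mixing of $A$, and the precise cocycle-coboundary relations among $\tilde{\phi},\tilde{\psi},\tilde{\psi}^{+}$ are all needed there. Once this is established, items (1) and (3) are inherited essentially for free from the Gibbs properties of $\mu$ on $\Sigma_A^{+}$.
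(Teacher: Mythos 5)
The paper actually offers no proof of this theorem: it is quoted from Haydn's work (the citation \texttt{localproductstructureHaydn}) and, unlike the Proposition that follows it, has no proof environment attached. So there is no in-paper argument to compare against, and your proposal is best judged on its own merits as a reconstruction of the standard argument.

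As a sketch it is sound and identifies the right moving parts. Items (1) and (3) do descend from the corresponding properties of the Gibbs state $\mu$ for $\tilde{\psi}^{+}$: the Gibbs estimates give full support and non-atomicity, $\Psi$ is a homeomorphism onto the local unstable leaf of the base point, and the density $e^{-(w+v)}$ is positive and continuous (with continuous joint dependence on the base point via the H\"older regularity of $\tilde{\phi}$ and $\tilde{\psi}^{+}$), so (1) and (3) pass through. You are also right that the real content is the well-posedness of (2) across the fiber identifications and that this is exactly where the coboundary relations $\tilde{\psi}-\tilde{\phi}=k-k\circ\sigma$, $\tilde{\psi}^{+}=\tilde{\psi}+w-w\circ\sigma$, and $\sigma_{\ast}\mu=\mu$ (the conformal/eigenmeasure property of $\mu$) are consumed. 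Two small caveats: the matching factor across a full fiber crossing is not the scalar $e^{-\tilde{\phi}(\underline x)}$ but the \emph{function} $e^{-\tilde{\phi}(\cdot)}$ on the unstable leaf, since $\phi([\cdot,u])$ varies along the leaf --- the identity is between measures with densities, not a proportionality of total masses; and mixing of $A$ is not used in the consistency check itself (it is what gives uniqueness and full support of the Gibbs state), so listing it among the ingredients needed ``there'' is slightly misattributed. Beyond that, you flag the consistency computation as the main obstacle but do not carry it out, so what you have is a correct outline rather than a complete proof; since the paper itself delegates the proof to Haydn, that level of detail is arguably appropriate.
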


We need to modify the previous family; denote
\begin{enumerate}
 	\item $h_{\phi}(\underline z)=e^{\tilde{\phi}(z)},$
 	\item $\Delta_{\underline x, \underline{y}}:\Wuloc{\Psi([\underline x,0],\underline{y})}\to\Real_{>0}$,
 	\[
 	\Delta_{\underline x,\underline{y}}(\Psi([\underline x,0],\underline{y}')):=\prod_{k=1}^{\oo}\frac{h_{\phi}(\sigma^{-k}\underline x\cdot y')}{h_{\phi}(\sigma^{-k}\underline x\cdot y)}.
 	\]
 	\item \(\nu_{\Psi([\underline x,0],\underline{y})}^u=\Delta_{\underline x,\underline{y}}\mu_{\Psi([\underline x,0],\underline{y})}^u\)
 \end{enumerate}
and extend for points $p=\Psi([\underline x,t],\underline{y}), 0\leq t<R^+(\underline x)$ by requiring
\[
	(s_{-t})_{\ast} \nu_{s_t(p)}^u=e^{-\int_0^t \phi([\underline x\cdot \underline y,u])du}\nu^u_{p}
\] 
By the form of these measures, there is a one to one correspondence with a family $\{\nu^u_{\underline z}:\underline{z}\in \Sigma_A\}$ satisfying the following properties.

 \begin{proposition}
 It holds:
 \begin{enumerate}
 	\item each $\nu_{\underline z}^u$ is a measure on $\Wuloc{\underline z}$.
 	\item $\sigma\nu^u_{\underline z}=e^{\tilde{\phi}(z)}\nu^u_{\sigma \underline z}$.
 	\item If $\xi=\{\Wuloc{\underline z}:\underline{z} \in \Sigma_A\}$ and $\Eq(\sigma,\phi)=\{\mm\}$, then the conditional measures of $\mm$ on $\xi$ are given as
 	\[
 	\mm^{\xi}_{\underline z}=\frac{\nu^u_{\underline z}}{\nu^u_{\underline z}(\Wuloc{\underline z})},
 	\]
 \end{enumerate}
 \end{proposition}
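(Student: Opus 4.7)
The proof verifies each item directly from the definitions and the properties of $\mu^u$ furnished by the preceding theorem, with a shift-cocycle identity for the family $\Delta$ as the key algebraic ingredient.

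For item $(1)$, the Hölder regularity of $\phi$ makes $\tilde\phi$ Hölder, so the infinite product defining $\Delta_{\underline x, \underline y}$ converges uniformly to a continuous, strictly positive function on the local unstable set. Consequently $\nu^u_p = \Delta_{\underline x, \underline y}\,\mu^u_p$ inherits from $\mu^u_p$ the properties of being a Radon measure, non-atomic, and of full support on $\Wuloc{p}$. Transport by the homeomorphism $\Psi$ then yields the corresponding family on $\Sigma_A$.

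For item $(2)$, I first establish the shift-cocycle identity
\[
\Delta_{\sigma\underline z}(\sigma\underline z') \;=\; \frac{h_\phi(\underline z')}{h_\phi(\underline z)}\;\Delta_{\underline z}(\underline z'),\qquad \underline z' \in \Wuloc{\underline z},
\]
obtained by the reindexing $k \mapsto k-1$ in the defining infinite product and isolating the new $k=0$ boundary term. I then invoke the flow equivariance of $\mu^u$ stated in the preceding theorem, translated to the discrete level via the Poincaré return map (identified with $\sigma$ on the section $D$): the resulting transformation of $\mu^u$ under $\sigma$ involves a variable factor $h_\phi(\underline z')^{-1}$ reflecting the non-constant return time $R(\underline z')$. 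Combining this with the cocycle identity, the variable factors $h_\phi(\underline z')$ cancel, leaving the constant $h_\phi(\underline z) = e^{\tilde\phi(\underline z)}$ and giving $\sigma \nu^u_{\underline z} = e^{\tilde\phi(\underline z)}\,\nu^u_{\sigma\underline z}$.

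For item $(3)$, I would invoke Haydn's construction: by that construction the family $\{\mu^u_p\}$, built from the eigenmeasure of $\Lcal^\ast$ composed with the transfer $e^{-(w+v)}$, encodes the conditional measures of the equilibrium state associated with the one-sided normalization $\tilde\psi^+$. The role of the $\Delta$ correction is to realize the second cohomology $\tilde\phi\sim\tilde\psi^+$ at the level of the measures themselves, so that after normalization the $\nu^u_{\underline z}$ become conditionals adapted to $\tilde\phi$. Since $\Eq(\sigma,\tilde\phi)=\{\mm\}$ and the conditional measures on the measurable partition $\xi$ are uniquely determined modulo null sets, one then identifies $\mm^\xi_{\underline z}$ with $\nu^u_{\underline z}/\nu^u_{\underline z}(\Wuloc{\underline z})$. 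The main obstacle is item $(3)$: to make the identification rigorous one must carefully track the cohomological corrections across the three equivalent potentials $\tilde\phi$, $\tilde\psi$, $\tilde\psi^+$ and the transfer functions $k$, $w$, $v$ in Haydn's construction, verifying that the $\Delta$ adjustment delivers exactly the right conditional. Items $(1)$ and $(2)$ reduce to essentially mechanical computations once the cocycle identity for $\Delta$ is in hand.
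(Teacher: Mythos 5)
Your treatment of items (1) and (2) is correct and fleshes out what the paper dismisses as ``follows directly by construction.'' For (1) you correctly note that H\"older continuity of $\tilde\phi$ makes the infinite product defining $\Delta$ converge uniformly to a continuous, strictly positive density, so that $\nu^u_p=\Delta\,\mu^u_p$ inherits Radon-ness, non-atomicity and full support from $\mu^u_p$. For (2) your shift-cocycle identity $\Delta_{\sigma\underline z}(\sigma\underline z')=\bigl(h_\phi(\underline z')/h_\phi(\underline z)\bigr)\Delta_{\underline z}(\underline z')$ is exactly the mechanism: the boundary term produced by reindexing $k\mapsto k-1$ cancels the point-dependent exponential coming from the flow equivariance of $\mu^u$ with the nonconstant roof, leaving the constant $e^{\tilde\phi(\underline z)}$. (One should also say that the identity, and item (2) itself, are stated on the extended measures on $W^u(\underline z)$, since $\sigma$ does not preserve the local unstable set; the paper records this extension at the end of Section 1.)

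Where you diverge from the paper is item (3), and there the proposal has a genuine gap. The paper does \emph{not} retrace Haydn's eigenmeasure construction or track the transfer functions $k$, $w$, $v$ across the cohomologous potentials. It instead asserts that (3) is a \emph{consequence} of (1) and (2) alone, citing Section~4 of \cite{EqStatesCenter} for the general statement that a continuous family of positive leaf measures satisfying the cocycle relation of item (2) is, once normalized, the disintegration of the unique equilibrium state along unstable atoms. Your write-up gestures at two things at once --- an explicit identification via Haydn's construction plus the $\Delta$-correction, and the uniqueness of disintegration --- but neither is carried out, and you even flag the verification as ``the main obstacle.'' Note that uniqueness of conditional measures modulo null sets only says the disintegration of $\mm$ is well-defined; it does nothing to show that disintegration \emph{equals} the normalized $\nu^u_{\underline z}$. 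To close the gap you must either actually check that integrating the normalized family reproduces the Gibbs measure (the heavy bookkeeping you anticipate), or else prove the abstract characterization the paper relies on, namely that the cocycle relation of (2) forces the equilibrium state's unstable conditionals to be proportional to $\nu^u_{\underline z}$; the latter is the cleaner and is what the cited reference does.
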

 \begin{proof}
 The first and the second part follow directly by construction. The third is consequence of the previous ones, and is proven (in more generality) in Section 4 of \cite{EqStatesCenter}.
 \end{proof}

Using $\pi$ we can use the measures $\nu_{p}^u$ to induce a family $\{\nu^u_x:x\in M\}$ where
each $\nu^u_x$ is a measure on a local unstable disc containing $x\in M$. This follows from the properties of the symbolic model, and is given for example in the Appendix of \cite{Ruelle1975}.

The measures defined in the Main Theorem are precisely the $\nu^u_x$. Note that these give the disintegration of the unique equilibrium state for the system $(\mathtt f, \varphi)$, and therefore they are uniquely defined modulo normalization.

\begin{remark}
Suppose that $\mm\in\PM$ for which there exists an adapted partition $\xi$ to $\Fu$ so that
its conditionals are of the form
	\[
	\mm^{\xi}_x=\frac{\nu^u_x}{\nu^u_x(\xi(x))}.
    \]
    Then the projetive class of $\nu_x^u$ can be recovered from the conditional measures, and in particular if $\xi'$ is another adapted partition to $\Fu$, then the conditional measures of $\mm$ with respect to $\xi'$ are given by 
    $\{\nu^u_x\}$.
\end{remark}

From the previous discussion it follows that to establish the \hyperlink{main}{Main Theorem} it suffices to prove:

\begin{theorem}
Let $\mm\in \PM[\Sigma_A]$ so that its disintegration along the partition $\xi$ is given by the family $\nu_{\underline z}^u$,
\[
	\mm^{\xi}_{\underline{z}}=\frac{\nu_{\underline z}^u(\cdot )}{\nu_{\underline z}^u(\Wuloc{\underline z})}.
\]
Then $\mm$ is the unique equilibrium state for the system $(\sigma, \phi)$.
\end{theorem}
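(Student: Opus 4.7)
My strategy is to identify $\mm$ with the unique equilibrium state $\mu\in\Eq(\sigma,\tilde{\phi})$, whose Rokhlin conditionals on $\xi$ are precisely the prescribed family by the preceding proposition. Since two measures with the same conditionals on $\xi$ can differ only through their factor measures on the quotient $\Sigma_A/\xi\cong\Sigma_A^{-}$, the task reduces to showing that these factor measures coincide, where I write $\overline{\mm}$ and $\overline{\mu}$ for the respective factors.

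The first step is to verify that the hypothesis on conditionals is preserved under $\sigma_\ast$. Using the equivariance $\sigma\nu^u_{\underline z}=e^{\tilde{\phi}(\underline z)}\nu^u_{\sigma\underline z}$ and the fact that $\sigma^{-1}(\Wuloc{\underline w})$ lies entirely in a single leaf of $\xi$, a direct disintegration computation shows that $\sigma_\ast\mm$ again has conditionals of the prescribed form. By linearity the same holds for the Cesaro averages $\mm_N=\frac{1}{N}\sum_{n=0}^{N-1}\sigma_\ast^n\mm$, and any weak-$\ast$ accumulation point $\widetilde{\mm}$ is $\sigma$-invariant with these conditionals. Any invariant measure with such conditionals is forced to be an equilibrium state: combining the Rokhlin entropy formula for the generating partition $\xi$ with the equivariance of $\nu^u$ yields $h_{\widetilde{\mm}}(\sigma)+\int\tilde{\phi}\,d\widetilde{\mm}=\Ptop(\sigma,\tilde{\phi})$, so $\widetilde{\mm}\in\Eq(\sigma,\tilde{\phi})=\{\mu\}$. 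Thus $\mm_N\to\mu$ weakly-$\ast$.

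The central obstacle is to upgrade this Cesaro convergence to the equality $\mm=\mu$. For this, the equivariance of $\nu^u$ combined with the preservation of conditionals under $\sigma_\ast$ yields a functional equation for the factor $\overline{\mm}$ on $\Sigma_A^{-}$ governed by the Ruelle--Perron--Frobenius operator $\Lcal$ of \Cref{sub:ruelle_perron_frobenius_operator}. By the spectral gap of $\Lcal$---a consequence of the mixing of $A$ together with the H\"older regularity of $\tilde{\phi}$---this equation admits $\overline{\mu}$ as its unique normalized solution, forcing $\overline{\mm}=\overline{\mu}$ and therefore $\mm=\mu$. The hard part is the translation of the pointwise equivariance of the leafwise measures $\nu^u_{\underline z}$ into the correct transfer-operator identity on the one-sided quotient: it requires a careful bookkeeping of how the integrated potential $\tilde{\phi}$ and the cohomologous $\tilde{\psi}^{+}$ interact with the fiberwise decomposition, but once this identity is set up the spectral uniqueness of $\Lcal$ closes the argument.
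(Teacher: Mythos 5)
The core difficulty your proposal must face is that $\mm$ is \emph{not} assumed to be $\sigma$-invariant, and this is precisely where your argument leaves a genuine gap. You correctly reduce matters to showing that the factor measures $\overline{\mm}$ and $\overline{\mu}$ on $\Sigma_A^-$ coincide, and the verification that the conditional hypothesis is preserved under $\sigma_\ast$ is sound. However, the spectral-gap step as stated does not close: the relation coming from the preservation of conditionals reads $\overline{\sigma_\ast\mm}=T^\ast\overline{\mm}$ for an appropriate transfer operator $T$, and iterating only gives $\overline{\sigma^n_\ast\mm}=(T^\ast)^n\overline{\mm}\to\overline{\mu}$. Since $\sigma_\ast\mm\neq\mm$ in general, this is \emph{not} an eigenvalue equation $T^\ast\overline{\mm}=\overline{\mm}$, so uniqueness of the eigenmeasure of $\Lcal^\ast$ does not apply to $\overline{\mm}$ itself; it only tells you where the push-forwards accumulate, which is also what your Cesaro step already yields. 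The piece you call ``the hard part'' — producing an identity that pins down $\overline{\mm}$, not its iterates — is in fact the entire content of the theorem, and it is left undone.

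The paper attacks this differently. It exploits the fact (recorded in the remark following the construction of $\nu^u$) that the conditionals of $\mm$ with respect to \emph{every} adapted partition, in particular the coarsenings $\xi^n$ whose atoms are unions of $\xi$-atoms, are again given by the extended family $\nu^u$. This converts the hypothesis into infinitely many constraints on $\overline{\mm}$. The Marcus operators $R_n\h$ of \Cref{eq:Rnh} and their uniform convergence to a constant $c(\h)$ (\Cref{thm:Marcus}, proved through equicontinuity plus the mixing of $A$ via the adapted families $\Theta^m_{n,\underline x}$) then give $E_n\h\to c(\h)$ uniformly, and the tower property $\int\h\,d\mm=\int E_n\h\,d\mm$ forces $\int\h\,d\mm=c(\h)$ for every continuous $\h$. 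This determines $\mm$ outright, with no need to pass through Cesaro averages or to invoke spectral theory of $\Lcal$. Your Cesaro step is also somewhat wasted: it requires the nontrivial claim that weak-$\ast$ accumulation of measures with prescribed (continuous) conditionals again has those conditionals — plausible but not free — and even once established it only identifies the \emph{limit} of averages, not $\mm$. I would recommend abandoning the Cesaro route and instead writing out the compatibility of $\nu^u$ with the increasing coarsenings $\xi^n$, which is where the real constraint lives.
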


We finish this part by noting that since $\sigma \Wuloc{\underline x}=\bigsqcup_{i=1}^d \Wuloc{\underline x^i}$ for some points $\underline z^i$, we can extend the $\nu_{\underline z}^u$ to measures on the whole unstable set $\Wu{\underline z}$. We will tacitly assume that in what follows.

\section{Marcus' operators} 
\label{sec:Marcus}

Given a continous function $\h:\Sigma_A\to \Real$ and $n \geq 0$ we define a new continous function 
\begin{equation}\label{eq:Rnh}
R_n\mathfrak{h}(\underline x)=\frac{1}{\nuux(\Wuloc{\underline x})}\int_{\Wuloc{\underline x}}\h\circ \sigma^n \der\nux[\underline{x}].	
\end{equation}
This way we get a family of linear contractions $\{R_n:\CM[\Sigma_A]\to\CM[\Sigma_A]:n\geq 0\}$.

\begin{proposition}\label{pro:equicontinuidad}
The family $\{R_n\h\}_n$ is equicontinuous. 
\end{proposition}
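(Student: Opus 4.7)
Fix $\varepsilon>0$; we seek $\delta>0$, independent of $n$, such that $d(\underline x,\underline x')<\delta$ implies $|R_n\h(\underline x)-R_n\h(\underline x')|<\varepsilon$ for every $n\ge 0$. The argument rests on two structural facts: the normalised probability $\nu^u_{\underline z}/\nu^u_{\underline z}(\Wuloc{\underline z})$ depends only on the leaf $\Wuloc{\underline z}$, not on the choice of $\underline z$ inside it; and the shift $\sigma^n$ only moves differences confined to past coordinates even further into the past, so the uniform-continuity modulus of $\h$ gives errors that do not grow with $n$. If $d(\underline x,\underline x')<2^{-N-1}$, i.e.\ $x_k=x'_k$ for $|k|\le N$, form the bracket $\underline x'':=[\underline x',\underline x]\in\Wuloc{\underline x'}\cap\Wsloc{\underline x}$; since $\Wuloc{\underline x''}=\Wuloc{\underline x'}$, the first fact gives $R_n\h(\underline x')=R_n\h(\underline x'')$, and it suffices to estimate $|R_n\h(\underline x)-R_n\h(\underline x'')|$, where $\underline x$ and $\underline x''$ share their entire future.

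Denote by $\pi:\Wuloc{\underline x}\to\Wuloc{\underline x''}$ the stable holonomy sending $\underline y\in\Wuloc{\underline x}$ to the unique point of $\Wuloc{\underline x''}$ with the same forward coordinates. Changing variables under $\pi$ decomposes
\[
R_n\h(\underline x)-R_n\h(\underline x'')=\mathrm{I}_n+\mathrm{II}_n,
\]
where $\mathrm{I}_n$ records the change of integrand from $\h\circ\sigma^n\circ\pi^{-1}$ to $\h\circ\sigma^n$, and $\mathrm{II}_n$ records the comparison between the two normalised probabilities $(\pi_\ast\nu^u_{\underline x})/\nu^u_{\underline x}(\Wuloc{\underline x})$ and $\nu^u_{\underline x''}/\nu^u_{\underline x''}(\Wuloc{\underline x''})$ on $\Wuloc{\underline x''}$. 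For $\mathrm{I}_n$: if $\underline y\in\Wuloc{\underline x}$ then $\underline y$ and $\pi(\underline y)$ agree on all coordinates $k\ge -N$, hence $\sigma^n\underline y$ and $\sigma^n\pi(\underline y)$ agree on all $k\ge -N-n$; their shift-metric distance is thus at most $2^{-N-n-1}\le 2^{-N-1}$, and consequently $|\mathrm{I}_n|\le\omega_\h(2^{-N-1})$ uniformly in $n$, where $\omega_\h$ denotes the modulus of continuity of $\h$.

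The crux is $\mathrm{II}_n$: it reduces to showing that the Radon--Nikodym derivative $J:=d(\pi_\ast\nu^u_{\underline x})/d\nu^u_{\underline x''}$ is close to the constant $\nu^u_{\underline x}(\Wuloc{\underline x})/\nu^u_{\underline x''}(\Wuloc{\underline x''})$, with relative error tending to $0$ as $\underline x\to\underline x''$. Unpacking $\nu^u_{\underline z}=\Delta\cdot\mu^u_{\underline z}$ together with the transformation of the Haydn measures $\mu^u$ under stable holonomy, $J(\underline y'')$ becomes an explicit infinite product of ratios of the form $h_\phi(\sigma^{-k}\cdot)/h_\phi(\sigma^{-k}\cdot)$ whose two arguments agree on a block of size that grows with $N$. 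A standard telescoping estimate using that $\phi$ is H\"older yields $\log$-oscillation of $J$ bounded by $C\theta^N$ for some $\theta\in(0,1)$, whence $|\mathrm{II}_n|\le C\|\h\|_\infty\theta^N$ uniformly in $n$. Altogether
\[
|R_n\h(\underline x)-R_n\h(\underline x')|\le\omega_\h(2^{-N-1})+C\|\h\|_\infty\theta^N,
\]
which is less than $\varepsilon$ once $N$ is large enough, independently of $n$. The main technical step is the H\"older control on $J$: a routine but tedious Gibbs-cocycle computation requiring one to carefully separate the role of the $\Delta$-correction from the stable-holonomy Jacobian of $\mu^u$; everything else is bookkeeping with the uniform continuity of $\h$.
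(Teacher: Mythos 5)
Your proof is correct and follows essentially the same strategy as the paper's, but you make explicit a step the paper leaves compressed, and you also silently correct a typo in the paper's statement (the paper writes $\underline z\in\Wuloc{\underline y}$ where it must mean $\underline z\in\Wsloc{\underline y}$, since otherwise all three points lie on the same local unstable leaf and the local product structure plays no role).

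Both arguments decompose the comparison of $R_n\h$ at two nearby points into an unstable-leaf step (free, since $R_n\h$ is constant on each local unstable set, exactly your first structural fact, which the paper phrases as $\nux=C(\underline x,\underline y)\nux[\underline y]$) and a stable-holonomy step. For the stable step the paper merely asserts that the paired integrands $\h\circ\sigma^n$ are uniformly close and that the quantity
\[
\left|\frac{1}{\nux[\underline{y}](\Wuloc{\underline y})}\int_{\Wuloc{\underline y}}\h\circ\sigma^n\der\nux[\underline{y}]-\frac{1}{\nux[\underline{z}](\Wuloc{\underline z})}\int_{\Wuloc{\underline z}}\h\circ\sigma^n\der\nux[\underline{z}]\right|
\]
tends to zero uniformly in $n$, absorbing the measure comparison into the stated continuity of the family $\nu^u$. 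You instead split this into your $\mathrm{I}_n$ (integrand comparison, handled cleanly by the observation that $\sigma^n$ only pushes the disagreeing block further into the past, giving a bound by $\omega_\h(2^{-N-1})$ independent of $n$) and $\mathrm{II}_n$ (a genuine Radon--Nikodym estimate on $d(\pi_\ast\nu^u_{\underline x})/d\nu^u_{\underline x''}$, which you correctly identify as the crux and reduce to a H\"older cocycle telescoping bound giving $O(\theta^N)$ log-oscillation). This buys you a quantitative modulus of equicontinuity, $\omega_\h(2^{-N-1})+C\|\h\|_\infty\theta^N$, rather than the paper's soft continuity argument. The one place you defer detail is the cocycle computation for $J$; this is indeed standard for Gibbs measures built as in Haydn's construction with the $\Delta$ correction, so the deferral is acceptable, but it is worth noting the paper hides exactly this step inside the unproved phrase ``the family depends continuously on $p$.''
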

\begin{proof}
Let $\underline{x}, \underline{y}, \underline{z}\in C(a)_0$ with
\begin{itemize}
	\item $\underline{y}\in \Wuloc{\underline x}$,
	\item $\underline z\in \Wuloc{\underline{y}}$
\end{itemize}

We have $\nux=C(\underline x,\underline y)\nux[\underline y]$, and thus $R_n\h(\underline x)=R_n\h(\underline y)$. On the other hand, the difference between the functions $\h\cdot \one_{\Wuloc{\underline y}}, \h\cdot \one_{\Wuloc{\underline z}}$ converges uniformly to zero as $\underline y\mapsto \underline z$, and it follows that same is true for the quantity
\[
	\left|\frac{1}{\nux[\underline{y}](\Wuloc{\underline y})}\int_{\Wuloc{\underline y}}\h\der\nux[\underline{y}]-\frac{1}{\nux[\underline{z}](\Wuloc{\underline z})}\int_{\Wuloc{\underline z}}\h\der\nux[\underline{z}]\right|
\]
For points $\underline y, \underline{z}$ in the same stable set one can apply this fact to $\h\circ \sigma^n\cdot \one_{\Wuloc{\underline y}}, \h\circ\sigma^n \one_{\Wuloc{\underline z}}, \forall n\geq 0$.

Putting everything together and using the local product structure inside $C(a)_0$ we deduce the claim.
\end{proof}

We will now show that $\{R_n\h\}_n$ converges uniformly to some constant $c(\h)$. Let $\Omega$ be the set of all sequences $\{\Theta^m_{n,\underline x}:n,m\geq 0, \underline x\in \Sigma_A\}$ satisfying
\begin{enumerate}
\item $\Theta^m_{n,x}$ is a probability measure on $\sigma^m(\Wuloc{\underline x})$, 
\item for every $\underline x\in \Sigma_A$
\begin{equation}\label{eq:Thetameasures}
R_{n+m}\h(\underline x)=\int R_{n}\h(\underline y)\der\Theta^m_{n,\underline x}(\underline y).
\end{equation}
\end{enumerate}

\begin{definition}
$\{\Theta^m_{n,\underline x}\}\in\Omega$ is adapted to a cylinder $U$ if
\[
	\inf_{n,m, \underline x} \Theta^m_{n,\underline x}(U)>0.
\]
\end{definition}

The bulk of the work is contained in proving that for any cylinder $U$ there is $\{\Theta^m_{n,\underline x}\}\in\Omega$ adapted to it. If this were the case, note that $\{c_n(\h)=\inf_{\underline{x}}  R_{n}\h(\underline x)\}_{n\geq 0}$ is an increasing bounded sequence of real numbers, and therefore 
\[
	\exists c(\underline{h})=\lim_n c_n(\h)=\sup_n c_n(\h).
\]
Note also that if $g$ is any accumulation point of $\{R_n\h\}_n$, then $g\geq c(\underline{h})$.

Recall that $A^{m}$ has positive entries, for every $m\geq M$.

\begin{lemma}\label{lem:proporcion}
Given a cylinder $U$ there exist $C_U>0$ such that for every $\underline{x} \in \Sigma_A, \forall m>M$ it holds
\[
	\sigma^m(\Wuloc{\underline x})=\bigcup_{i=1}^{k_m} \Wuloc{\underline y^i}
\]
with
\[
	\frac{\#\{i:\underline{y}^i\in U\}}{k_m}\geq C_U.
\]
\end{lemma}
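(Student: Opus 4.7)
The plan is to describe the partition $\sigma^m(\Wuloc{\underline x})$ combinatorially, translate the condition $\underline y^i\in U$ into a constraint on admissible words, and then obtain the required uniform lower bound by a direct sandwich argument using $A^M>0$.

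Since $\underline{y}\in\Wuloc{\underline{x}}$ iff $y_n=x_n$ for all $n\leq 0$, the coordinates of $\sigma^m(\underline y)$ at positions $n\leq -m$ are forced to equal $x_{n+m}$, while at positions $-m<n\leq 0$ they are $y_{n+m}\in\{y_1,\ldots,y_m\}$. The pieces $\Wuloc{\underline y^i}$ of $\sigma^m(\Wuloc{\underline x})$ are thus in bijection with admissible words $\mathbf a=(a_1,\ldots,a_m)$ satisfying $A_{x_0,a_1}=1$ and $A_{a_j,a_{j+1}}=1$; hence $k_m=(A^m\mathbf 1)_{x_0}=\sum_j (A^m)_{x_0,j}$. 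Writing $U=C(b_{-k}\cdots b_\ell)_{-k}$, a piece $\Wuloc{\underline y^i}$ meets $U$ iff the past of $\underline y^i$ agrees with $b_{-k}\cdots b_0$ on positions $-k,\ldots,0$, because the positive coordinates in $\Wuloc{\underline y^i}$ are free and $b_0 b_1\cdots b_\ell$ is admissible as $U\neq\emptyset$. For $m>k$ this becomes $(a_{m-k},\ldots,a_m)=(b_{-k},\ldots,b_0)$, so the numerator of the desired proportion is $(A^{m-k})_{x_0,b_{-k}}$.

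The remaining task is to bound $(A^{m-k})_{x_0,b_{-k}}/(A^m\mathbf 1)_{x_0}$ from below uniformly in $x_0$. Set $\alpha=\min_{i,j}(A^M)_{ij}\geq 1$ and $\beta=\max_{i,j}(A^M)_{ij}$. The identity $(A^n)_{ij}=\sum_l (A^{n-M})_{il}(A^M)_{lj}$ yields, for $n\geq M$,
\[
\alpha\,(A^{n-M}\mathbf 1)_i\;\leq\;(A^n)_{ij}\;\leq\;\beta\,(A^{n-M}\mathbf 1)_i;
\]
summing over $j$ gives $(A^n\mathbf 1)_i\leq d\beta(A^{n-M}\mathbf 1)_i$, hence $(A^n)_{ij}\geq(\alpha/(d\beta))(A^n\mathbf 1)_i$. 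Combining this with the elementary estimate
\[
(A^m\mathbf 1)_{x_0}=\sum_l (A^{m-k})_{x_0,l}\,(A^k\mathbf 1)_l\;\leq\; d^k(A^{m-k}\mathbf 1)_{x_0}
\]
(using that each row sum of $A^k$ is at most $d^k$, since $A$ has entries in $\{0,1\}$), one finds, for $m\geq M+k$,
\[
\frac{(A^{m-k})_{x_0,b_{-k}}}{(A^m\mathbf 1)_{x_0}}\;\geq\;\frac{\alpha}{d^{k+1}\beta}\;=:\;C_U,
\]
a positive constant depending only on $U$ and on $A$.

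The main obstacle is the combinatorial bookkeeping of the first two steps: one must carefully verify how the positions of $\mathbf a$ encode the past of $\underline y^i$ and check that the positive part of $U$ can always be realized by some admissible continuation in $\Wuloc{\underline y^i}$. Once this is settled, the lower bound follows from the uniform comparison of row entries of $A^n$ provided by the mixing of $A$.
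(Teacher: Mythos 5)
Your proof is correct and rests on the same key idea as the paper's: counting the pieces of $\sigma^m(\Wuloc{\underline x})$ as admissible words of length $m$ out of $x_0$ and using $A^M>0$ to get a uniform lower bound on the fraction whose prescribed suffix lands the representative in $U$. The paper packages this slightly differently: it first reduces by a WLOG to cylinders of the form $C(a_0\cdots a_l)_0$ (so only the symbol at position $0$ needs to be controlled) and then partitions the pieces by their symbol at position $-M$, observing that within each class the fraction with prescribed $0$-th symbol equals $(A^M)_{e,a_0}/(A^M\mathbf 1)_e>0$, independent of $m$; your argument keeps the general cylinder $C(b_{-k}\cdots b_\ell)_{-k}$ and computes the ratio $(A^{m-k})_{x_0,b_{-k}}/(A^m\mathbf 1)_{x_0}$ explicitly. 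One small consequence of skipping the WLOG is that your bound only holds for $m\geq M+k$ rather than for all $m>M$ as stated; this is harmless, since the lemma is used only in the limit $m\to\infty$ (and for general cylinders with $k>0$ the stated $\forall m>M$ genuinely requires first shifting the cylinder to position $0$, as the paper does, or else the numerator can vanish when $m<k$). Your explicit constant $C_U=\alpha/(d^{k+1}\beta)$ is a nice addition.
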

\begin{proof}
It is no loss of generality to consider the case $U=C(a_0\cdots a_l)_{0}$, since $\sigma^{k}C(a_0\cdots a_l)_{-k}=C(a_0\cdots a_l)_{0}$. Fix $\underline x$: we have
\[
\sigma^m(\Wuloc{\underline x})=\{\underline{y}: y_{n-m}=x_{n}, \forall n\leq 0  \}=\bigcup_{i=1}^{k_m} \Wuloc{\underline{z}^i}= \bigcup_{i=1}^{k_m} \Wuloc{\underline{z}^i}	
\]
where in particular for $i\neq i'$ there exists $-m<j\leq 0$ so that $z^i_j\neq z^{i'}_j$. We denote $\mathtt W=\{\underline{z}^i:1\leq i\leq  k_m\}$ and observe that we can write 
\[
	\mathtt W=\bigcup_{e=1}^d \mathtt W_e\quad \mathtt W_e=\{\underline{z}^i\in \mathtt W:z_{-M}^{i}=e\}, 
\]
where each $\mathtt W_e\neq \emptyset$. Therefore for $m\geq M$ and each $1\leq e\leq d$ the proportion
\[
	\frac{\#\{\underline{z}^i\in\mathtt W_e:z_0^i=a_0\}}{\#\mathtt W_e}
\]
is positive and independent of $m$. This implies the claim, since one can choose a fixed proportion of points $\underline y^i\in \Wuloc{\underline z^i}$ from those with $z^i_0=a_0$, also satisfying $\underline y^i\in U$ (because $U$ does not depend on $m$). 
\end{proof}

We compute, for $n,m\geq 0$ 
\begin{align*}
R_{n+m}\h(\underline{x})&=\frac{1}{\nuux(\Wuloc{\underline{x}})}\int_{\Wuloc{\underline{x}}}\h\circ \sigma^{n+m}\der\nuux\\
&=\frac{1}{\nux[\sigma^m\underline{x}](\sigma^m(\Wuloc{\underline{x}}))}\int_{\sigma^m(\Wuloc{\underline{x}}))}\h\circ \sigma^{n}\der\nu^u_{\sigma^m\underline{x}}\\
&=\frac{1}{\nux[\sigma^m\underline{x}](\sigma^m(\Wuloc{\underline{x}}))}\sum_{i=1}^{k_m}\int_{\Wuloc{\underline{y}^i}}\h\circ \sigma^n \der\nu^u_{\sigma^m\underline{x}}\\
&=\frac{1}{\nux[\sigma^m\underline{x}](\sigma^m(\Wuloc{\underline{x}}))}\sum_{i=1}^{k_m}\frac{\nux[\sigma^m \underline{x}](\Wuloc{\underline y^i})}{\nux[\underline{y}^i](\Wuloc{\underline y^i})}\int_{\Wuloc{\underline{y}^i}}\h\circ \sigma^n \der\nux[\underline{y}^i]\\
&=\sum_{i=1}^{k_m} \frac{\nux[\sigma^m \underline{x}](\Wuloc{\underline{y}^i})}{\nux[\sigma^m\underline{x}](\sigma^m(\Wuloc{\underline{x}}))} R_n\h(\underline{y}^i).
\end{align*}
It follows that $\{\Theta^m_{n,\underline x}\}\in\Omega$, where
\[
	\Theta^m_{n,\underline x}=\sum_{i=1}^{k_m} \frac{\nux[\sigma^m \underline{x}](\Wuloc{\underline{y}^i})}{\nux[\sigma^m\underline{x}](\sigma^m(\Wuloc{\underline{x}}))}\delta_{\underline{y}^i}.
\]
\begin{lemma}
$\{\Theta^m_{n,\underline x}\}$ is adapted to $U$
\end{lemma}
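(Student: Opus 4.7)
Assume without loss of generality $U=C(a_0\cdots a_l)_0$, and observe that the formula for $\Theta^m_{n,\underline x}$ does not actually involve $n$, so it suffices to bound $\Theta^m_{\underline x}(U)$ uniformly in $m$ and $\underline x$. The strategy has three stages: (i) choose representatives $\underline y^i$ adapted to $U$; (ii) pull the problem back to $\Wuloc{\underline x}$ via the transformation rule of the $\nu^u$-family; (iii) bound the resulting conditional probability below using the Gibbs / mixing properties of the equilibrium state.

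Since $R_n\h$ is constant on local unstables, the identity \eqref{eq:Thetameasures} holds for any choice of representatives $\underline y^i\in\Wuloc{\underline z^i}$. I pick $\underline y^i\in\Wuloc{\underline z^i}\cap U$ whenever $z^i_0=a_0$---this is possible because the forward coordinates of $\underline y^i$ are free subject only to $A$-compatibility, and $U$ is non-empty---and arbitrary otherwise. Thus $\{i:\underline y^i\in U\}\supset\{i:z^i_0=a_0\}$. Iterating the transformation rule $\sigma_*\nu^u_{\underline z}=e^{\tilde\phi(\underline z)}\nu^u_{\sigma\underline z}$, and setting $A^i_m:=\sigma^{-m}(\Wuloc{\underline y^i})\cap\Wuloc{\underline x}$, one obtains
\[
\nu^u_{\sigma^m\underline x}(\Wuloc{\underline y^i})=e^{-S_m\tilde\phi(\underline x)}\nu^u_{\underline x}(A^i_m),\qquad \nu^u_{\sigma^m\underline x}(\sigma^m\Wuloc{\underline x})=e^{-S_m\tilde\phi(\underline x)}\nu^u_{\underline x}(\Wuloc{\underline x}),
\]
and the exponential prefactors cancel in the weights. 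The sets $A^i_m$ with $z^i_0=a_0$ form a partition of $\{\underline w\in\Wuloc{\underline x}:w_m=a_0\}$, since $\underline w\in A^i_m$ forces $w_m=y^i_0=z^i_0$. Therefore
\[
\Theta^m_{n,\underline x}(U)\geq \sum_{i:z^i_0=a_0}\frac{\nu^u_{\underline x}(A^i_m)}{\nu^u_{\underline x}(\Wuloc{\underline x})}=\frac{\nu^u_{\underline x}\bigl(\{\underline w\in\Wuloc{\underline x}:w_m=a_0\}\bigr)}{\nu^u_{\underline x}(\Wuloc{\underline x})},
\]
which is precisely the $\mm$-conditional probability on $\Wuloc{\underline x}$ of the future event $\{w_m=a_0\}$.

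For $m\geq M$ (the mixing length of $A$), I would bound the right-hand side below uniformly in $\underline x$ by a positive constant $c_{a_0}$, as a consequence of the Gibbs bounded distortion together with the spectral-gap / topological mixing of the Ruelle operator $\Lcal$ recalled in Section~\ref{sub:ruelle_perron_frobenius_operator}. For the finitely many $m<M$, the function $\underline x\mapsto \Theta^m_{n,\underline x}(U)$ is continuous (Proposition~\ref{pro:equicontinuidad}), strictly positive by full support of $\nu^u_{\underline x}$, and defined on the compact space $\Sigma_A$, and is therefore uniformly bounded below as well. The main obstacle is rigorously establishing this uniform-in-$\underline x$ lower bound in the regime $m\geq M$: although morally immediate from mixing of $\mm$, genuine uniformity requires transporting the Gibbs estimates for the one-sided Gibbs state $\mu$ on $\Sigma_A^+$ through Haydn's identification $\Psi$ and carefully controlling the continuous auxiliary functions $v,w$ that appear there, or equivalently invoking the spectral gap of $\Lcal$ acting on H\"older functions.
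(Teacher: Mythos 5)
Your reduction of $\Theta^m_{n,\underline x}(U)$ to a conditional probability on $\Wuloc{\underline x}$ is a legitimate reformulation, and the cancellation of the $e^{-S_m\tilde\phi(\underline x)}$ prefactors in the ratio is correct given the proposition's transformation rule. But you explicitly stop short of the lemma's actual content: you write that "the main obstacle is rigorously establishing this uniform-in-$\underline x$ lower bound" and then gesture at spectral gap of $\Lcal$ without carrying it out. That lower bound \emph{is} the lemma; deferring it is a genuine gap, not a technical loose end. Moreover, spectral gap is not the right tool here --- the gap estimate controls decay of correlations, whereas what is needed is a uniform \emph{static} lower bound on the relative $\nu^u_{\underline x}$-mass of the future cylinder $\{w_m=a_0\}$ inside $\Wuloc{\underline x}$, which is a Gibbs/bounded-distortion plus counting statement.

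The paper's proof is shorter and closes the gap with two ingredients that you do not use. First, it observes that by construction $\nux[\sigma^m\underline x]|_{\Wuloc{\underline y^i}}$ equals $\nux[\underline y^i]$, so $\Theta^m_{n,\underline x}(U)$ becomes literally the weighted counting ratio
\[
\frac{\sum_{i:\underline y^i\in U}\nux[\underline y^i](\Wuloc{\underline y^i})}{\sum_{i}\nux[\underline y^i](\Wuloc{\underline y^i})}.
\]
Second, it invokes the uniform comparability of the total masses $\nux[\underline z](\Wuloc{\underline z})$ over all $\underline z\in\Sigma_A$ (a consequence of the Gibbs construction), which reduces the weighted ratio to the unweighted counting proportion $\#\{i:\underline y^i\in U\}/k_m$, and this proportion is uniformly positive precisely by Lemma~\ref{lem:proporcion}. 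Your argument never cites Lemma~\ref{lem:proporcion} and never exploits the comparability of the normalizing constants; instead it re-encodes the same combinatorics inside the unproven estimate $\nu^u_{\underline x}(\{w_m=a_0\})/\nu^u_{\underline x}(\Wuloc{\underline x})\geq c_{a_0}>0$. To complete your route you would essentially have to decompose $\{w_m=a_0\}$ into cylinders, apply the Gibbs bound to each, and then count which cylinders end at $a_0$ --- which is Lemma~\ref{lem:proporcion} again. So the reformulation is sound but does not save any work, and as written the proof is incomplete.
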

\begin{proof}
Indeed, by definition of the measure $\nux[\sigma^m \underline{x}]$,
\[
	\Theta^m_{n,\underline x}(U)=\frac{\sum_{\substack{i=1\\ \underline{y}^i\in U}}^{k_m} \nux[\sigma^m \underline{x}](\Wuloc{\underline{y}^i})}{\sum_{i=1}^{k_m} \nux[\sigma^m \underline{x}](\Wuloc{\underline{y}^i})}=\frac{\sum_{\substack{i=1\\ \underline{y}^i\in U}}^{k_m} \nux[\underline{y}^i](\Wuloc{\underline{y}^i})}{\sum_{i=1}^{k_m} \nux[\underline{y}^i](\Wuloc{\underline{y}^i})}.
\]
On the other hand, for every $\underline{z}, \underline{w}\in\Sigma_A$ the measures $\nux[\underline{z}](\Wuloc{\underline{z}}), \nux[\underline{w}](\Wuloc{\underline{w}})$ are uniformly comparable, therefore the result follows from \Cref{lem:proporcion}. 
\end{proof}

Consider any point of accumulation $\g\in\CM[\Sigma_A]$ of $\{R_n\h\}$, with 
\[
	\lim_{k}R_{n_k}\h=\g.
\]
As noted $g\geq c(\underline{h})$, and one can compute
\[
R_{n_k+m}\h(\underline x)-c(\underline{h})=\int (R_{n_k}\h-c(\underline{h}))\der\Theta^m_{n_k,\underline x}.
\]
We choose $\underline{x}^{n_k+m}$ with $R_{n_k+m}\h(\underline{x}^{n_k+m})=c(\underline{h})$ and consider the measures
\[
\Theta^m_{(n_k)}=\Theta^m_{n,\underline{x}^{n_k+m}}.
\]
It follows that 
\[
	0=\int (R_{n_k}\h-c(\underline{h}))\der \Theta^m_{(n_k)}
\]
hence if $\Theta$ is any accumulation point of $\{\Theta^m_{(n_k)}\}$, then
\[
	\int (\g-c(\underline{h}))\der\Theta=0.
\]
Observe that $\Theta(U)>0$, and since $\g-c(\underline{h})\geq 0$, we deduce that there exists some $\underline{x}_U\in U$ such that $g(\underline{x}_U)=c(\underline{h})$. But $U$ is arbitrary, hence $\g\equiv c(\underline{h})$. We have shown:

\begin{theorem}\label{thm:Marcus}
For every $\h \in\CM[\Sigma_A]$ the family $\{R_n\h\}_n$ converges uniformly to a constant $c(\underline{h})$.
\end{theorem}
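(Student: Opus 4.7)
The plan is to combine the equicontinuity of the family $\{R_n\h\}_n$ (already established in \Cref{pro:equicontinuidad}) with an averaging/semigroup identity satisfied by the operators $R_n$, and then force all accumulation points to coincide with a specific constant by exploiting the mixing of $A$. By Arzel\`a-Ascoli, equicontinuity guarantees that every subsequence of $\{R_n\h\}_n$ has a uniformly convergent subsequence, so it suffices to show that every uniform accumulation point is the same constant.

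The candidate constant is $c(\h)=\lim_n c_n(\h)$, where $c_n(\h)=\inf_{\underline x}R_n\h(\underline x)$. To see this limit exists and is the right value, I would derive the identity
\[
R_{n+m}\h(\underline x)=\int R_n\h\, \der\Theta^m_{n,\underline x}
\]
for probability measures $\Theta^m_{n,\underline x}$ supported on $\sigma^m(\Wuloc{\underline x})$. Using the $\sigma$-equivariance $\sigma\nu^u_{\underline z}=e^{\tilde\phi(\underline z)}\nu^u_{\sigma\underline z}$ and decomposing $\sigma^m(\Wuloc{\underline x})=\bigcup_i\Wuloc{\underline y^i}$, the weights come out explicitly as $\nu^u_{\sigma^m\underline x}(\Wuloc{\underline y^i})/\nu^u_{\sigma^m\underline x}(\sigma^m\Wuloc{\underline x})$. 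This averaging formula immediately implies $\inf R_{n+m}\h\geq \inf R_n\h$, so $c_n(\h)$ increases to some $c(\h)$, and any accumulation point $\g$ of $\{R_n\h\}$ satisfies $\g\geq c(\h)$ pointwise.

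Next, for any fixed cylinder $U$, I would use mixing ($A^M>0$ entrywise) to show $\inf_{n,m,\underline x}\Theta^m_{n,\underline x}(U)\geq C_U>0$. The key combinatorial input is that for $m\geq M$ the points $\underline y^i$ indexing the partition of $\sigma^m(\Wuloc{\underline x})$ can be grouped by their symbol at position $-M$, and within each group a fixed proportion can be chosen to lie in $U$, since every state admits an $A$-admissible word of length $M$ ending at the first symbol of $U$. Combined with the fact that the total masses $\nu^u_{\underline z}(\Wuloc{\underline z})$ are uniformly comparable in $\underline z$, this gives the required uniform lower bound.

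To finish, pick $\underline x^{n_k+m}$ with $R_{n_k+m}\h(\underline x^{n_k+m})=c_{n_k+m}(\h)\to c(\h)$ along a subsequence $R_{n_k}\h\to \g$ uniformly, and let $\Theta$ be any weak-$\ast$ accumulation of the measures $\Theta^m_{n_k,\underline x^{n_k+m}}$. Passing to the limit in the averaging identity yields $\int(\g-c(\h))\,\der\Theta=0$, and since $\g-c(\h)\geq 0$ is continuous and $\Theta(U)\geq C_U>0$, there exists $\underline x_U\in U$ with $\g(\underline x_U)=c(\h)$. As $U$ is arbitrary we obtain $\g\equiv c(\h)$ on a dense set, hence everywhere by continuity. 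The main obstacle I expect is the uniform-in-$(n,m,\underline x)$ combinatorial lower bound on $\Theta^m_{n,\underline x}(U)$: one must count local unstable sets intersecting $U$ carefully and translate the count into measure via the comparability of the $\nu^u$-masses; everything else is a relatively standard passage to the limit.
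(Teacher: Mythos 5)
Your proposal is correct and follows essentially the same route as the paper: equicontinuity plus the averaging identity $R_{n+m}\h=\int R_n\h\,\der\Theta^m_{n,\underline x}$, the mixing-based uniform lower bound $\Theta^m_{n,\underline x}(U)\geq C_U$ (the paper's \Cref{lem:proporcion}), and the passage to the limit forcing every accumulation point to equal $c(\h)$. Your handling of the last step is in fact slightly more careful than the paper's, which writes $R_{n_k+m}\h(\underline x^{n_k+m})=c(\h)$ where one can only guarantee $R_{n_k+m}\h(\underline x^{n_k+m})=c_{n_k+m}(\h)\to c(\h)$; your version closes this small gap without changing the argument.
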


\section{Equicontinuity of the conditional expectations}

Recall that we are denoting by $\xi$ the partition of $S(A,R)$ into local unstable manifolds,
\[
\xi(\Psi([\underline{x},t],\underline{y}))=\{\Psi([\underline{x},t],\underline{y}')):\underline{y}'\in\Sigma_A^{+}\}	
\]
and for $n\geq 0$ we denote by $\xi^n$ the partition with atoms 
\begin{align*}
\xi^n(\Psi([\underline{x},t],\underline{y}))=\{\Psi([(x_k)_{k\leq n} a_{k+1}\cdots a_0,t ],\underline y'):&\underline y'\in\Sigma_A^+,\\
&A_{x_{-k} a_{k+1}}=1, A_{a_j, a_{j+1}}=1, k+1\leq j<-1\}.
\end{align*}
It holds
\begin{equation}
\xi^n(\Psi([\underline{x},t],\underline{y}))=\bigsqcup_{j} \xi(\Psi([\underline{x}^j,t],\underline{y}))
\end{equation}
for some points $\underline{x}^j\in\Sigma_A^{-}$.

We fix $\mm$ a probability measure in $S(A,R)$ whose conditionals on the unstable sets are given by the family $\{\nu_{p}^u\}$. Given $\h\in \CM[\Sigma_A]$ the conditional expectation of $\h$ with respect to $\xi^n$ can be computed in the point $p=\Psi([\underline{x},t],\underline{y})$ as
\begin{align*}\label{eq:Enh}
E_n\h(p)=\frac{1}{\nu^u_p(\xi^n(p))}\int_{\xi^n(p)}\h \der \nu^u_p=\frac{1}{\nu^u_{\sigma^{-n}\underline z}(\Wuloc{\sigma^{-n}\underline{z}})}\int_{\Wuloc{\sigma^{-n}\underline{z}}} \h([\sigma^n\underline{w},t]) \der\nu^u_{\sigma^{-n}\underline z}(\underline{\underline w}).
\end{align*}
where $\underline{z}=\underline{x}\cdot\underline{y}$

It now follows by \Cref{thm:Marcus} that $\{E_n\h\}_n$ converges uniformly to some constant $\underline{c}(\h)$, and therefore
\[
	\int \h \der\mm=\int E_n\h \der\mm\xrightarrow[n\to\oo]{} \int c(\h)\der\mm=c(\h),
\]
which in turn implies $c(\h)=\int \h \der \mm$. This shows that there is at most one measure having conditionals given by $\{\nu_{x}^u\}$, and since the equilibrium state for the potential $\phi$ satisfies this condition, it is this referred measure. This concludes the proof of the \hyperlink{main}{Main Theorem}.

\printbibliography

\end{document}